\def\RSsubtxt{section~}\newref{sub}{name = \RSsubtxt}}
\def\RSthmtxt{theorem~}\newref{thm}{name = \RSthmtxt}}
\def\RSlemtxt{lemma~}\newref{lem}{name = \RSlemtxt}}
\numberwithin{equation}{section}
\def\id{\text{id}}
\def\Id{\operatorname{Id}}
\def\End{\operatorname {End}}
\def\End{\operatorname {End}}
\DeclareMathOperator{\Span}{span}
\DeclareMathOperator{\sign}{sign}
\DeclareMathOperator{\Par}{Par}
\global\long\def\MM{M_{\mathbf{tot}}}
\global\long\def\MMM{M_{\mathbf{total}}}
\newtheorem{lemma}{Lemma}[section]
\newtheorem{theorem}[lemma]{Theorem}
\newtheorem{corollary}[lemma]{Corollary}
\newtheorem{convention}[lemma]{Convention}
\theoremstyle{definition}
\newtheorem{definition}[lemma]{Definition}
\newtheorem{conjecture}[lemma]{Conjecture}
\theoremstyle{remark}
\newtheorem{remark}[lemma]{Remark}
\newtheorem{notation}{Notation}
\def\uline{\underline}
\mathchardef\mhyphen="2D
\title{The polynomial part of the codimension growth of affine PI algebras}
\author{}
\begin{document}

\author{Eli Aljadeff, ~Geoffrey Janssens and Yakov Karasik
\thanks {The first and third authors were supported by the ISRAEL SCIENCE FOUNDATION (grant No. 1017/12). The second author was supported by the FWO Ph.D fellowship}}

\maketitle
\begin{abstract}
Let $F$ be a field of characteristic zero and $W$ an associative affine $F$-algebra satisfying a polynomial identity (PI). The codimension sequence \{$c_n(W)$\} associated to $W$ is known to be of the form $\Theta (n^t d^n)$, where $d$ is the well known PI-exponent of $W$. In this paper we establish an algebraic interpretation of the polynomial part (the constant $t$) by means of Kemer's theory. In particular, we show that in case $W$ is a \textit{basic} algebra (hence finite dimensional), $t = \frac{q-d}{2} + s$, where $q$ is the number of  simple component in $W/J(W)$ and $s+1$ is the nilpotency degree of $J(W)$ (the Jacobson radical of $W$). Thus proving a conjecture of Giambruno.

\end{abstract}

\section{Introduction}
Let $W$ be an affine (i.e. finitely generated) PI algebra over a field $F$ of characteristic $0$. The codimension growth of $W$ was studied heavily in the last 40 years or so (see for instance \cite{GiaZai, BelKarRow}). It provides an important tool for measuring the ''size'' of the $T$-ideal of identities of $W$ in asymptotic terms. In particular, the exponential part of the asymptotics is key in the classification of varieties of PI algebras. Let us recall briefly some definitions.

Let $F \langle X \rangle$ be the free algebra over a countable set $X$. A polynomial $f(x_1, \ldots, x_n) \in  F \langle X \rangle$ in noncommuting indeterminates $x_1, \ldots, x_n \in X$, is called a polynomial identity of $W$ if $f(w_1, \ldots, w_n)=0$ for any $(w_1,\ldots,w_n) \in W^{n}$. We say $W$ is a PI-algebra if such nonzero polynomial exists. The set of polynomial identities of $W$, denoted $\Id(W)$, is clearly an ideal of $F \langle X \rangle$. Moreover $\Id(W)$ is a $T$-ideal, that is, it is invariant under any endomorphism $\phi \in \End_F(F \langle X \rangle)$. It is well known (applying multilinearization) that $T$-ideals are generated as $T$-ideal by multilinear elements, i.e by the elements in $P_n(F) =  \Span_F \{ x_{\sigma(1)} \cdots x_{\sigma(n)} \mid \sigma \in S_n \}$, $n \in \mathbb{N}$. Let $c_n(W) = \dim_F P_n(F)/ P_n(F) \cap \Id(W)$ be the $n^{th}$ term of the codimension sequence of $W$. It is clear that $c_n(W) = c_n(W \otimes_F R)$ where $R$ is a commutative domain and hence, for the calculation of $c_n(W)$ we may assume that {\it $F$ is an algebraically closed field}.

Regev, in his pioneering article, proved the sequence $\{c_n(W)\}$ is exponentially bounded \cite{RegTensor}. Twenty five years later Giambruno and Zaicev showed (as conjectured by Amitsur) that $ \lim\limits_{n \rightarrow \infty} \sqrt[n]{c_n(W)}$ exists and is an integer. This is the PI-exponent of $W$, denoted by $\exp(W)$ \cite{GiaZai98}.

Giambruno and Zaicev proved that $\exp(W)$ is determined by the $F$-dimension of a suitable subalgebra of $W$ in case $W\cong A$ is finite dimensional over $F$. Let us state their result precisely. Recall that by Wedderburn-Malcev's theorem any finite dimensional algebra $A$ over $F$ can be decomposed into $A = A_{ss} \oplus J(A)$, where $A_{ss}$ is a maximal semisimple subalgebra of $A$ (unique up to isomorphism) and $J(A)$ is the Jacobson radical of $A$. Moreover, $F$ being algebraically closed, $A_{ss} \cong  A_{1} \times \cdots \times A_{q}$, where the product is direct and $A_{i} \cong  M_{d_i}(F)$ is the algebra of $d_i \times d_i$-matrices over $F$.
With this notation, Giambruno and Zaicev proved that

$\exp(A) = \max \{dim_{F}(A_{i_1}\times \cdots \times A_{i_r}): $ where $ i_{j} \neq i_{k}$ for $j \neq k$ and $ A_{i_1}J A_{i_2} \cdots J A_{i_r} \neq 0 \}.$

In order to get their result for arbitrary affine algebras (namely $ \lim\limits_{n \rightarrow \infty} \sqrt[n]{c_n(W)}$ exists and is an integer) one needs to invoke the fundamental representability theorem of Kemer \cite{Kemer,AljBelKar}.

\begin{theorem}\label{representability theorem}
For any affine PI algebra $W$ there exists a finite dimensional algebra $A$ such that $\Id(W) = \Id(A)$.
\end{theorem}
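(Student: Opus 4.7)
The plan is to prove representability by induction on Kemer's index, a lexicographically ordered numerical invariant attached to the T-ideal $\Gamma = \Id(W)$. Since $\Gamma = \Id(F\langle X\rangle/\Gamma)$, one may replace $W$ by its relatively free algebra; the Braun-Kemer-Razmyslov theorem ensures that the Jacobson radical of any affine quotient is nilpotent, and this structural input is what will drive the inductive step.

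To define Kemer's index, I would first invoke the Capelli-type consequence of Kemer's theory: there is a largest integer $\alpha$ such that multilinear polynomials outside $\Gamma$ can carry arbitrarily many pairwise disjoint alternating sets of size $\alpha$. With $\alpha$ fixed, define $s$ to be the largest integer such that, for every $\mu$, one finds a multilinear polynomial outside $\Gamma$ with $s$ disjoint alternating sets of size $\alpha+1$ (``small'' sets) alongside $\mu$ disjoint alternating sets of size $\alpha$ (``big'' sets). The pair $(\alpha, s)$ is Kemer's index, and polynomials realising it are the \emph{Kemer polynomials}.

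The construction then proceeds in two steps, corresponding to Kemer's two main lemmas. The first produces a basic finite dimensional algebra $A$ of the \emph{same} Kemer index, whose semisimple part has dimension $\alpha$ and whose radical is nilpotent of index $s+1$; by design $\Gamma \subseteq \Id(A)$, and $A$ is engineered so that a prescribed finite set of Kemer polynomials of $W$ survives on it. The second lemma shows that adjoining to $\Gamma$ all Kemer polynomials lying outside $\Id(A)$ produces a T-ideal $\Gamma'$ of strictly smaller Kemer index; by induction $\Gamma' = \Id(A')$ for some finite dimensional $A'$, and then $\Id(A \times A') = \Id(A) \cap \Id(A') = \Gamma$ closes the step. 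The base of the induction is reached once every Kemer polynomial of $W$ already fails on $A$, in which case $A$ itself is the desired model.

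The main obstacle, and the technical heart of Kemer's theory, is the second lemma: given a Kemer polynomial of $W$, one must manufacture a basic finite dimensional algebra on which it does not vanish while all identities of $W$ continue to hold. This requires the Phoenix property — the invariance of the ``shape'' of a Kemer polynomial under multiplication modulo identities of strictly smaller index — together with a careful Young-tableau analysis that separates semisimple from radical contributions in any evaluation, thereby calibrating $\dim_F A_{ss} = \alpha$ and the nilpotency length of $J(A)$. Executing this lemma rigorously, and assembling the resulting pieces into the product of basic algebras supplied by the first lemma, is where the bulk of the work lies; everything else is bookkeeping relative to the index.
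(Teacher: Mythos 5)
You should first note that the paper does not prove this theorem at all: it is Kemer's representability theorem, quoted with references to Kemer's monograph and to Aljadeff--Kanel-Belov--Karasik, and used as a black box to reduce the study of affine PI algebras to finite dimensional ones. So there is no in-paper argument to compare against; the only fair comparison is with the literature your outline is reconstructing.

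Measured against that, your proposal correctly identifies the architecture of the standard proof --- induction on the Kemer index $(\alpha,s)$, nilpotence of the radical via Braun--Kemer--Razmyslov, a first lemma producing a finite dimensional (basic) algebra $A$ with $\Gamma\subseteq\Id(A)$ on which the Kemer polynomials of $\Gamma$ survive, a second lemma showing that adjoining the Kemer polynomials drops the index so that induction applies to $\Gamma'$, and the Phoenix property to close the identity $\Id(A\times A')=\Gamma$. But as written it is a roadmap, not a proof: the two lemmas you name are precisely where the entire difficulty of Kemer's theorem is concentrated, and you state them as goals ("is engineered so that", "one must manufacture") rather than proving them. In particular, the construction of $A$ from the relatively free algebra --- typically via the Grassmann-free or generic-element constructions together with the representation-theoretic analysis of alternating sets, and the delicate verification that $\dim_F A_{ss}$ equals the first Kemer invariant while $J(A)$ has the right nilpotency class --- is entirely absent, and there is no way to certify the argument without it. A minor but real slip: you have reversed the paper's terminology, calling the sets of size $\alpha+1$ "small" and those of size $\alpha$ "big"; in the paper (and in Kemer's theory) the small sets are the ones of size $d$ that can occur arbitrarily often and the big sets are the ones of size $d+1$ that are bounded in number by $s$. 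The roles you assign to the two sizes are correct, so this is a labelling error rather than a mathematical one, but it should be fixed.
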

Regev conjectured that the asymptotic behavior of the sequence $\{c_n(W)\}$ is of the form
$c n^t d^n$ for some constants $c \in \mathbb{R}, t \in \frac{1}{2}\mathbb{Z}$ and $d \in \mathbb{Z}_{\geq 0}$ (we write $c_n(W) \simeq c n^t d^n$ where $f \simeq g$ means $\lim \frac{f}{g} =1$). A weakened version of Regev's conjecture was confirmed by Berele and Regev in \cite{BerRegAsymp}, namely that $\{c_n(W)\}$ is asymptotically bounded by the functions
\begin{equation}\label{eq: regev conj}
c_1 n^{t} (\exp(W))^n \lesssim c_n(W) \lesssim c_2 n^{t} (\exp(W))^n
\end{equation}
for some constants $c_1, c_2 \in \mathbb{R}$ and  $t \in \frac{1}{2}\mathbb{Z}$ in case the codimension sequence is eventually nondecreasing (i.e. monotonic nondecreasing for large enough $n$). Furthermore, they showed that if $W$ is unital (and hence the codimension sequence is nondecreasing) $c_1 = c_2$ and thus confirming in this case Regev's conjecture. Recently, in \cite{GiaZaiEve}, Giambruno and Zaicev proved that the sequence of codimensions is indeed eventually nondecreasing, showing the asymptotic inequality above holds for any affine PI algebra.
We refer to $t=t(W)$ as the polynomial part of the codimension sequence of $W$.

We emphasize that the proof of Berele and Regev for the existence of the parameter $t(W)$ does not give a formula for its calculation. In this article (following a reduction to basic algebras) we present an interpretation, a la Giambruno and Zaicev, of the polynomial part of the codimension growth for any finite dimensional $F$-algebra $W$.

Let $A$ be a finite dimensional algebra and let $\mbox{Par}(A) = (\dim_{F}A_{ss}, \mbox{nildeg}(J(A)) -1)$ be its \textit{parameter} (nildeg$(J(A))$ denotes the nilpotency degree of the Jacobson radical). Such an algebra $A$ is said to be basic (or fundamental) if it is not PI equivalent to $B_1 \times \cdots \times B_l$ where $\mbox{Par}(B_i) < \mbox{Par}(A)$ for every $1 \leq i \leq l$. By induction one can easily get the following

\begin{lemma}\label{Basic decomposition}
Let $A$ be a finite dimensional algebra over $F$. Then there exist basic algebras $B_1, \ldots, B_l$ such that $A$ is PI-equivalent to $B_1 \times \cdots \times B_l$.
\end{lemma}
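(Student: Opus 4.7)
The natural approach is strong induction on the parameter $\text{Par}(A) = (\dim_F A_{ss}, \text{nildeg}(J(A)) - 1) \in \mathbb{Z}_{\geq 0}^2$, ordered lexicographically. Since this ordering is well-founded on $\mathbb{Z}_{\geq 0}^2$, strong induction is available. The plan is: if $A$ is already basic, take $l = 1$ and $B_1 = A$; otherwise, unwind the definition of ``not basic'' once, apply the inductive hypothesis to each factor, and concatenate the resulting lists of basic algebras.

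More precisely, suppose the claim has been established for every finite dimensional $F$-algebra with parameter strictly less than $\text{Par}(A)$. If $A$ is basic we are done. If not, then by negating the definition there exist finite dimensional algebras $B_1, \ldots, B_l$ with $\text{Par}(B_i) < \text{Par}(A)$ for every $i$ such that $A$ is PI-equivalent to $B_1 \times \cdots \times B_l$. By the inductive hypothesis, each $B_i$ is PI-equivalent to a product $C_{i,1} \times \cdots \times C_{i,l_i}$ of basic algebras. Since PI-equivalence is preserved under finite direct products (that is, $\text{Id}(X \times Y) = \text{Id}(X) \cap \text{Id}(Y)$, so replacing factors by PI-equivalent ones preserves the identities of the whole product), we conclude that $A$ is PI-equivalent to the product of all the $C_{i,j}$, which is a finite product of basic algebras.

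The only point that needs a moment's thought is the well-foundedness of the parameter: we must be sure that iterating the ``decompose into strictly smaller pieces'' step cannot continue forever. This is immediate because $\text{Par}$ takes values in $\mathbb{Z}_{\geq 0}^2$ under lexicographic order, which is a well-ordering. Hence the main (and only) conceptual obstacle, namely termination of the recursive decomposition, is handled by the choice of the parameter. No delicate structural analysis of $A_{ss}$ or $J(A)$ is required for this lemma; the content is purely formal once one accepts the definition of basic and the lex well-ordering on parameters.
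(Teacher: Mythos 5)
Your proof is correct and matches the paper's approach: the paper itself only remarks ``By induction one can easily get the following,'' and the induction it has in mind is precisely yours --- lexicographic well-foundedness of $\Par$ on $\mathbb{Z}_{\geq 0}^2$, unwinding the definition of basic once, and recursing on the factors. Nothing further is needed.
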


This can be used to reduce the problem of interpreting the polynomial part (from arbitrary finite dimensional algebras) to basic algebras.

\begin{corollary} \label{reduction to basic algebras}

With the above notation,
$\exp(A)=\max_{1 \leq i \leq l}\exp(B_{i})$ and

$$ t(A) = \max_{j}\{t(B_j): \exp(B_{j})=\exp(A) \}.$$

\end{corollary}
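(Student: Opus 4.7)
The plan is to exploit the standard fact $\Id(B_1 \times \cdots \times B_l) = \bigcap_{i=1}^{l} \Id(B_i)$, from which one can sandwich the codimension sequence of $A$ between the maximum and the sum of the codimension sequences of the $B_i$. Concretely, since $\Id(A)=\Id(B_{1}\times\cdots\times B_{l}) \subseteq \Id(B_i)$, one has $c_n(A) \geq c_n(B_i)$ for every $i$. In the other direction, reduction modulo each $\Id(B_i)$ induces an injection
$$P_n(F) / (P_n(F) \cap \Id(A)) \hookrightarrow \bigoplus_{i=1}^{l} P_n(F) / (P_n(F) \cap \Id(B_i)),$$
whose kernel vanishes precisely because a polynomial lying in every $\Id(B_i)$ lies in their intersection. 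This yields $c_n(A) \leq \sum_{i=1}^l c_n(B_i)$.

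Next, set $d = \max_i \exp(B_i)$, $I = \{i : \exp(B_i) = d\}$, and $t^{*} = \max_{i \in I} t(B_i)$. By the Berele--Regev--Giambruno--Zaicev bound \eqref{eq: regev conj} applied to each finite dimensional basic algebra $B_i$, there exist positive constants $c_{1}^{(i)}, c_{2}^{(i)}$ such that $c_{1}^{(i)} n^{t(B_i)} \exp(B_i)^n \leq c_n(B_i) \leq c_{2}^{(i)} n^{t(B_i)} \exp(B_i)^n$ for $n$ large. Choosing an index $j_0 \in I$ achieving $t(B_{j_0}) = t^{*}$, the lower half of the sandwich gives $c_n(A) \geq c_n(B_{j_0}) \geq c_{1}^{(j_0)} n^{t^{*}} d^n$. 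For the upper half, one splits the sum $\sum_i c_n(B_i)$ according to whether $i \in I$ or not: the indices with $i \notin I$ satisfy $\exp(B_i) < d$ and therefore contribute $O(n^{t(B_i)} \exp(B_i)^n) = o(n^{t^{*}} d^n)$, while the indices with $i \in I$ contribute at most a constant multiple of $n^{t^{*}} d^n$. Combining, $c_n(A) = \Theta(n^{t^{*}} d^n)$, which forces $\exp(A) = d$ and $t(A) = t^{*}$, as desired.

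The principal (mild) obstacle is the asymptotic comparison between exponential terms of distinct bases in the upper bound. This collapses because any polynomial prefactor $n^{t(B_i)}$ attached to a smaller base $\exp(B_i) < d$ is dominated by the exponential decay $(\exp(B_i)/d)^n$, so the surviving dominant contribution is precisely $n^{t^{*}} d^n$. An inductive proof on $l$ using only the case $l=2$ would also work, but offers no conceptual simplification over the direct sandwich argument above.
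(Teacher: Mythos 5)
Your proof is correct, and it is exactly the standard sandwich argument $\max_i c_n(B_i)\le c_n(A)\le\sum_i c_n(B_i)$ (via $\Id(A)=\bigcap_i\Id(B_i)$) combined with the asymptotics \eqref{eq: regev conj} that the paper implicitly relies on; the paper simply states the corollary without proof. Nothing is missing: the domination of the terms with $\exp(B_i)<d$ by the exponential gap is handled properly, and uniqueness of the polynomial part then forces $t(A)=t^{*}$.
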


\begin{remark}
The definition of a \textit{basic algebra} used in the proof of the representability theorem for affine PI algebras (Theorem \ref{representability theorem}) is different, yet equivalent,  to the one we presented above. The decomposition of finite dimensional algebras into the direct product of basic algebras up to PI equivalence (Lemma \ref{Basic decomposition} above) using the other definition is a key and nontrivial step in the proof of the Theorem.
 \end{remark}

For basic algebras Giambruno made the following conjecture.

\begin{conjecture}Let $A$ be a basic algebra with Wedderburn-Malcev decomposition $A \cong M_{d_1}(F) \times \cdots \times M_{d_q}(F) \oplus J(A)$.  Then
$$t(A) = \frac{q-d}{2} + s$$
where $d=d_1^{2}+ \cdots +d_{q}^{2}$ and $s+1$ is the nilpotency degree of $J(A)$.
\end{conjecture}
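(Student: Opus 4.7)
The plan is to analyze the $S_n$-cocharacter $\chi_n(A) = \sum_{\lambda \vdash n} m_\lambda \chi_\lambda$ of $A$ and exploit the identity $c_n(A) = \sum_\lambda m_\lambda d_\lambda$ with $d_\lambda = \dim S^\lambda$. Since $\exp(A) = d$ is already given by the Giambruno--Zaicev formula, the task reduces to pinning down the polynomial factor, which comes from two inputs: the asymptotic shape of the partitions $\lambda$ with $m_\lambda \neq 0$, and the growth of the multiplicities themselves. The argument then splits naturally into matching upper and lower bounds.

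For the upper bound, I would first restrict the support of $m_\lambda$. Kemer's theory, combined with the alternation constraints on each simple block $M_{d_i}(F)$ and the vanishing of $J(A)^{s+1}$, forces $m_\lambda = 0$ unless $\lambda$ --- after removing at most $s$ boxes --- fits inside a union of $q$ vertical strips of widths $d_1^2, \ldots, d_q^2$. Bounding each $m_\lambda$ uniformly by a polynomial in $n$ of fixed degree is then a standard Kemer-polynomial counting argument. Applying Stirling's formula to the hook-length formula for $d_\lambda$ on a partition lying in such a strip produces a factor $n^{(1-d_i^2)/2}(d_i^2)^{|\lambda^{(i)}|}$ from each rectangular block component; summing over $i$ gives $(q-d)/2$, and the $s$ free boxes contribute $n^s$, yielding $c_n(A) \leq C \cdot n^{(q-d)/2+s} d^n$.

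For the lower bound, I would construct explicit non-identities. Using basicity of $A$ together with $\exp(A) = d$ and $J(A)^s \neq 0$, one extracts a nonzero multilinear word visiting all $q$ simple components with exactly $s$ radical bridging factors. I would then build a Kemer polynomial $f$ by alternating on $d_i^2$-many variable sets of length proportional to $d_i^2 n / d$ inside each $M_{d_i}(F)$-block and filling the $s$ bridging positions with the chosen $J$-elements; such an $f$ is a non-identity on $A$. Perturbing the length distribution slightly while retaining non-vanishing yields a polynomial-in-$n$ family of contributing partitions, and summing their $d_\lambda$ via a local-central-limit estimate around the rectangular centroid recovers the matching lower bound $c \cdot n^{(q-d)/2+s} d^n$.

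The hard part will be the lower bound, specifically producing Kemer polynomials whose alternation saturates the asymptotic simultaneously on all $q$ simple blocks and all $s$ radical bridges. Kemer's original polynomials are tailored for representability rather than for extremal codimension growth, so a refinement exploiting basicity --- namely that no simple block and no radical slot can be dropped without decreasing $\mbox{Par}(A)$ --- should be needed. A secondary subtlety is identifying the dominant partition: this is a constrained optimization whose Lagrange conditions reproduce the Giambruno--Zaicev maximum, and a sharp saddle-point or local-limit argument is required to extract the precise $n^t$ factor rather than a crude polynomial bound.
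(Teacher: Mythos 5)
Your two-sided strategy parallels the paper's, but the cocharacter route as you describe it has a genuine gap in both directions, and it is the same gap: sharp control of the multiplicities $m_\lambda$. For the upper bound, the step ``bounding each $m_\lambda$ uniformly by a polynomial in $n$ of fixed degree is standard'' is precisely where the exponent is lost: the multiplicities contribute to $t(A)$, so an unspecified polynomial bound only gives $t(A)\le\frac{q-d}{2}+s+k$ for some unknown $k\ge 0$. Already for $A=M_d(F)$ the sum $\sum_\lambda d_\lambda$ over the contributing partitions is polynomially smaller than $c_n(M_d(F))\sim Cn^{(1-d^2)/2}(d^2)^n$; the missing polynomial factor lives entirely in the $m_\lambda$'s, and extracting it is the content of Regev's trace-codimension theorem, not a routine count. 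A second problem is that the hook-length formula applied to a single $\lambda$ with at most $d=\sum d_i^2$ rows does not factor as $\prod_i n^{(1-d_i^2)/2}(d_i^2)^{|\lambda^{(i)}|}$; to obtain that product structure you must show the cocharacter of $A$ is dominated by a controlled multiple of the outer product of the cocharacters of the $A_i$, which is the actual theorem to be proved. The paper does this at the level of monomials instead of characters: it passes to the associated algebra $\mathcal{A}=A_{ss}\ast F\{b_1,\dots,b_r\}/\langle b_1,\dots,b_r\rangle^{s+1}$, decomposes $P_n(\mathcal{A})$ by ``paths'' recording which simple component or radical generator each variable hits, bounds each piece by $c_{n_1}(A_1)\cdots c_{n_q}(A_q)$, and sums with multinomial weights via the Regev--Beckner theorem; the $n^s$ comes from placing the at most $s$ radical positions.

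The lower bound has the same defect: exhibiting one non-identity per partition (i.e.\ $m_\lambda\ge 1$) and summing $d_\lambda$ over a polynomial family of shapes undercounts by exactly the multiplicity factor, so the local-limit computation cannot reach $n^{(q-d)/2+s}d^n$. What is needed, and what the paper supplies, is a mechanism that imports the \emph{full} codimension $c_{n_i}(A_i)$ of each simple block rather than one representative per shape: fix a Kemer polynomial $f_0$ of the basic algebra $A$ with $s$ big alternating sets and a nonzero evaluation hitting every $A_i$ at its identity, substitute arbitrary multilinear polynomials $g_i$ in disjoint variable sets for the slots $z_i$ (bracketed by auxiliary variables and partially evaluated on matrix units), and prove via a dual-basis functional argument that linearly independent families $(g_1,\dots,g_q)$ modulo the $\Id(A_i)$ stay independent after substitution, at the cost of a factor $\prod d_i^2$ only; directness of the sums over coset transversals then restores the multinomial coefficient and the $n^s$ factor, and Regev--Beckner finishes. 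If you wish to salvage the representation-theoretic route, the missing ingredient is a two-sided asymptotic for the $m_\lambda$ of a basic algebra; this is not available and is essentially why Berele and Regev could prove that $t$ exists but not compute it.
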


In case $A=M_d(F)$ this was established by Regev \cite{RegAsymp}. The conjecture is also known to hold for the algebra of upper-block triangular matrices $UT(d_1, \ldots, d_q)$  \cite{GiaZai03}. This was proved by Giambruno and Zaicev. In their proof they used Lewin's theorem \cite{Lewin} and Berele and Regev's result \cite{BerReg}.

Applying Regev's result for matrix algebras, Giambruno's conjecture can be re-stated as follows. Let $A$ be a basic algebra over $F$ and
$A \cong A_{1}\times \cdots A_{q} \oplus J(A)$
be its Wedderburn-Malcev decomposition. Then

$$ t(A) = t(A_1) + \cdots + t(A_q) + (\mbox{nildeg}(J(A))-1).$$

Our main result appears is Theorem \ref{main theorem} where we prove the conjecture for an arbitrary basic algebra.

The paper is organized as follows. In section \ref{sec: basic algebras} we study basic algebras. In particular, to any basic algebra $A$ we associate another basic algebra $\mathcal{A}$ which is better understood and yet $\Par(\mathcal{A})=\Par(A)$. The associated algebra $\mathcal{A}$ will be the main tool in the proof of the upper bound, namely $t(A) \leq \frac{q-d}{2} + s$ (subsection \ref{sec: upper bound}). Finally in subsection \ref{sec: lower bound} we handle the lower bound and achieve the main result.

%

\begin{notation}
Throughout, $F$ will be an alegebraically closed field of characteristic $0$, $W$ an affine algebra over $F$, $A$ a finite dimensional algebra over $F$, $A_{ss}$ a maximal semisimple subalgebra and $J(A)$ its radical. By $\mbox{nildeg}(J(A))$ we denote the nilpotency degree of $J(A)$, i.e. the number such that $J(A)^l=0$ but $J(A)^{l-1} \neq 0$. All polynomials are elements of $F\langle X \rangle$, the polynomial ring on countably many noncommuting variables (the elements of $X$). We denote by $S_n$ the symmetric group on $n$ letters and by $\mathbb{N}$ the set of positive integers.
\end{notation}

\section{Basic algebras} \label{sec: basic algebras}

\subsection{Preliminaries}

In this section we present a condition which is equivalent to an algebra being basic. In particular, basic algebras may be viewed as minimal models for a given Kemer index.


%
%

{\it As always in this article $A \cong A_1 \times \cdots \times A_q   \oplus J(A)$ with $A_i \cong M_{d_i}(F)$. Thus $d^{2}_i = \dim_{F} A_i$. }

\begin{definition}[Kemer index]
For any $\nu \in \mathbb{N}$ let
$$\Delta_{\nu} = \{ r \in \mathbb{N} \cup \{ 0 \} \mid \exists p(X) \notin \Id(A) \mbox{ alternating in $\nu$ disjoint sets of size r} \}.$$ Clearly if $\nu \leq \gamma$, then $\max \Delta_{\nu} \geq \max \Delta_{\gamma}$. Let $d = \lim_{\nu} \max (\Delta_{\nu})$.

Next we let
\begin{align*}
& S_{\nu}^d= \{ j \in \mathbb{N} \cup \{ 0 \} \mid \exists p(X) \notin \Id(A) \mbox{ alternating in $\nu$ disjoint sets of size d} \\
& \mbox{ and alternating in $j$ sets of size $d+1$}\}.
\end{align*}
Also here $\max S_{\nu}^d \geq \max S_{\gamma}^d $ if $\nu \leq \gamma$ and we set
$s = \lim_{\nu} \max S_{\nu}^d.$

The tuple $\kappa_A = (d,s)$ is called the {\it Kemer index of $A$.}
\end{definition}

Note that $d$ and $s$ are finite. Indeed, $d$ is finite because $\max \Delta_{\nu} \leq \dim A$ for any $\nu > 0$ and $s$ is finite due to the definition of $d$. The alternating sets of size $d$ are called {\it small sets} and the sets of size $d+1$ are called {\it big sets}.
In other words, there exist nonidentities in an arbitrary number of alternating small sets, but only a finite number of big sets (actually at most $s$ big sets).

\begin{definition}(Kemer polynomials)
Let $\nu_0$ be a number where $\max \Delta_{\nu_0}=d$ and $\max S_{\nu_0}^{d}=s$. Then a multilinear polynomial $f$ is called a {\it Kemer polynomial} of $A$ if $f \notin \Id(A)$ has at least $\nu_0$ small sets (cardinality $d$) and exactly $s$ big sets (cardinality $d+1$).
\end{definition}

For a general finite dimensional algebra one has $\kappa_A =(d,s) \leq \Par(A)$ in the lexicographic order \cite{AljBelKar}. Kemer showed that equality characterizes basic algebras.

\begin{theorem}(Kemer, see \cite{Kemer} or (\cite{AljBelKar}, Prop. 7.13)) \label{th: basis iff par= index}
A finite dimensional algebra $A$ is basic if and only if $\kappa_A = \Par(A)$.
\end{theorem}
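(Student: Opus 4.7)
The plan is to prove the two implications separately, exploiting the general inequality $\kappa_A\leq \Par(A)$ in the lexicographic order, already cited from \cite{AljBelKar}.

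For the implication $\kappa_A=\Par(A)\Rightarrow A$ basic, I would argue by contraposition. Suppose $A$ is PI-equivalent to $B_1\times\cdots\times B_l$ with $\Par(B_i)<\Par(A)$ for every $i$. Since $\Id(A)=\bigcap_i \Id(B_i)$, every nonidentity of $A$ is a nonidentity of some $B_i$. Pick, for each $\nu\geq \nu_0$, a multilinear nonidentity $f_\nu$ of $A$ alternating in $\nu$ small sets (size $d$) and in $s$ big sets (size $d+1$); by pigeonhole there is an index $i$ such that $f_\nu\notin \Id(B_i)$ for arbitrarily large $\nu$. Consequently $\kappa_{B_i}\geq \kappa_A=(d,s)$ lexicographically, and chaining $\kappa_A\leq \kappa_{B_i}\leq \Par(B_i)<\Par(A)$ proves the contrapositive.

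The hard direction $A$ basic $\Rightarrow \kappa_A=\Par(A)$ is the main obstacle: we must exhibit explicit nonidentities attaining the parameter. The plan is to combine Regev-type polynomials on each matrix factor with radical connectors. For each simple component $A_j\cong M_{d_j}(F)$, invoke a central Regev polynomial $\rho_j$ alternating on $d_j^2=\dim A_j$ variables whose evaluation on $A_j$ is a nonzero scalar. One then builds
$$g \;=\; \rho_{i_1}(\bar x_1)\, y_1\, \rho_{i_2}(\bar x_2)\, y_2\cdots y_{r-1}\, \rho_{i_r}(\bar x_r),$$
with $y_k$ evaluated at radical elements realising a nonzero chain $A_{i_1}JA_{i_2}J\cdots JA_{i_r}$ of maximal length, and enlarges each alternating block by one further variable evaluated in $J(A)$ so as to create $s$ big sets, using that $J(A)^{s}\neq 0$. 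Taking sufficiently many independent copies of this skeleton and jointly antisymmetrising over their small blocks should yield polynomials alternating in arbitrarily many small sets of total size $d_1^2+\cdots+d_q^2=\dim A_{ss}$, as required.

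The crux is showing that the polynomials so constructed are \emph{not} identities of $A$. The difficulty is twofold: the small-set size $\dim A_{ss}$ forces each simple factor to be hit exactly once in every alternating block, and the $s$ big sets force a radical chain of length $s$. These two constraints can coexist on a single nonzero evaluation precisely because $A$ is basic: if every such construction vanished, one could extract from the failure a decomposition of $A$ (up to PI equivalence) into a product of algebras of strictly smaller parameter, contradicting basicness. Turning this dichotomy into a concrete nonvanishing evaluation---essentially the converse of the pigeonhole argument used in the easy direction---is the heart of Kemer's theorem and where the bulk of the work lies.
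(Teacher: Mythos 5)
The paper does not prove this theorem at all: it is imported as a black box from Kemer's work (\cite{Kemer}, or \cite{AljBelKar}, Prop.\ 7.13), so there is no internal proof to compare against. Judged on its own terms, your first direction is correct: the pigeonhole over the finitely many factors $B_i$ applied to the witnesses $f_\nu$ for infinitely many $\nu$, combined with the monotonicity of $\max\Delta_\nu$ and $\max S^d_\nu$ and the general inequality $\kappa_{B_i}\leq\Par(B_i)$, does give $\kappa_A\leq\kappa_{B_i}\leq\Par(B_i)<\Par(A)$, which is the contrapositive.

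The second direction, however, contains a genuine gap that you yourself flag but do not close. The entire content of the implication ``$A$ basic $\Rightarrow\kappa_A=\Par(A)$'' is the nonvanishing of polynomials that simultaneously (i) alternate on arbitrarily many sets of size $\dim_F A_{ss}$, forcing \emph{every} simple component to be hit exactly once in each block, and (ii) carry $s=\mathrm{nildeg}(J)-1$ big sets, forcing a radical chain of maximal length. Your construction (central Regev polynomials bridged by radical connectors, with blocks enlarged by one radical variable) is the right shape --- it is essentially how the paper exhibits Kemer polynomials for $UT(d_1,\ldots,d_q)$, where the multiplication is transparent --- but for a general basic algebra the product $\rho_{i_1}(\bar a)\,j_1\,\rho_{i_2}(\bar a)\cdots$ can vanish even though each factor is nonzero, and nothing in your argument rules this out. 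Your fallback, ``if every such construction vanished one could extract a decomposition of $A$ into algebras of strictly smaller parameter,'' is not a proof but a restatement of the theorem: carrying out that extraction is precisely Kemer's first and second lemmas (the Phoenix property, the passage through the Grassmann envelope/trace-identity machinery in \cite{Kemer} and \cite{AljBelKar}), and none of that is supplied. So the hard direction remains unproved; to use the theorem as the paper does, you must either cite it or reproduce that machinery.
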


An important consequence of this theorem is that Kemer polynomials do not vanish only for evaluations where all simple components $A_i$ are represented among the substitutions and precisely $J(A)-1$ variables are substituted by radical elements.

\subsection{Examples}
\subsubsection{Well known examples}
The matrix algebra $M_d(F)$ has Kemer index $(d^2,0)$ and thus is basic. To see this, note that any multilinear polynomial alternating on $d^2+1$ variables vanishes on $M_d(F)$. Now recall the $n$th Capelli polynomial

$$\mbox{cap}_n(X; Y) = \sum\limits_{\sigma \in S_n} \sign (\sigma) y_1 x_{\sigma(1)} \cdots y_{n} x_{\sigma(n)} y_{n+1}.$$
It is well known, \cite[Prop. 1.7.1]{GiaZai}, that $\mbox{cap}_{d^2+1}(X; Y) \in \Id (M_d(F))$ but $\mbox{cap}_{d^2}(X; Y) \notin \Id (M_d(F))$ and moreover all diagonal elementary matrices can be realized as a nonzero evaluation. Therefore for any $\mu \in \mathbb{N}$ the polynomial
$$\mbox{Cap}_{d^2}(X_1,\ldots,X_{\mu}; Y_1,\ldots,Y_{\mu}) = \prod_{i=1}^{\mu} \mbox{cap}_{d^2}(X_i,Y_i)$$
where $X_i =\{ x_{i,1}, \ldots, x_{i,d^2} \}$ is a Kemer polynomial of $M_d(F)$, proving that $\kappa_{M_d(F)}= (d^2,0)$.

The next natural and important example is the algebra of upper block triangular matrices $UT(d_1, \ldots, d_q)$ for positive integers $d_1, \ldots, d_q$. This is the subalgebra of $M_{d_1+\cdots+d_q}(F)$ consisting of the matrices
$$\left( \begin{array}{cclc}
M_{d_1}(F) & & & * \\
0 & \ddots & & \\
\vdots & & & \\
0 & \cdots & 0 & M_{d_q}(F)
\end{array}\right).$$
This is a basic algebra with Kemer index $(d, q-1)$, where $d=d_{1}^{2}+\cdots +d_{q}^{2}$. Indeed, it is well known that $UT(d_1, \ldots, d_q)$ has exponent $d$ and hence its Kemer index has the form $\kappa=(d,s)$. Moreover, since the nilpotency degree of $UT(d_1, \ldots, d_q)$ is $q-1$ we have $s \leq q-1$.  In order to complete the proof we will construct (Kemer) polynomials with arbitrary many small sets of cardinality $d$ and precisely $q-1$ sets of cardinality $d+1$. We start with the construction of polynomials with arbitrary many small sets of cardinality $d$. For the simple component $M_{d_i}(F)$, $i=1,\dots,q$, we consider the polynomial $\mbox{Cap}_{d_i^2}(X_{i,1},\ldots,X_{i,\mu}; Y_{i,1},\ldots,Y_{i,\mu})$ and their product bridged by $w_1,\ldots,w_{q-1}$

$$\mbox{Cap}_{d_1^2}(X_{1,1},\ldots,X_{1,\mu}; Y_{1,1},\ldots,Y_{1,\mu})\times w_1 \cdots \cdots w_{q-1} \times \mbox{Cap}_{d_q^2}(X_{q,1},\ldots,X_{q,\mu}; Y_{q,1},\ldots,Y_{q,\mu}).$$
which we denote by $\mbox{Cap}_{d_1^{2},\ldots,d_q^{2}}(X_{i,j};Y_{i,j}, i=1,\ldots,q, j=1,\ldots,\mu, W)$ or

for short $\mbox{Cap}_{d_1^{2},\ldots,d_q^{2}}(X_{i,j};Y_{i,j}, W)$.

Now for $j=1,\ldots,\mu$ we alternate in the polynomial above the sets $X_{1,j},\ldots,X_{q,j}$ and obtain a polynomial which we denote by
$f_{1, \mu}(X_{i,j};Y_{i,j}, W)$.  Finally we construct $q-1$ big sets by alternating $w_j$ with the set
$X_j= X_{1,j}\cup \cdots \cup X_{q,j}$, for $j=1,\ldots, q-1$. The result is a polynomial $f_{2,\mu}$ which alternates on $\mu - (q-1)$ small sets of cardinality $d$ and precisely $q-1$ big sets of cardinality $d +1$. We leave the reader the task to show that $f_{1,\mu}$ and $f_{2,\mu}$ are nonidentities of $UT(d_1, \ldots, d_q)$ by presenting nonzero evaluations (variables of $W$ are evaluated on radical elements).
Our construction of $f_{2, \mu}$ shows that $\kappa$, the Kemer index of $UT(d_1, \ldots, d_q)$, satisfies $\kappa_{UT(d_1, \ldots, d_q)} \geq (d, q-1)$. On the other hand $\kappa_{UT(d_1, \ldots, d_q)} \leq \Par(UT(d_1, \ldots, d_q))=(d, q-1)$ and hence $\kappa=\Par$. This shows $UT(d_1, \ldots, d_q)$ is basic and $f_2$ is a Kemer polynomial.

\subsubsection{The associated basic algebra}
As above we let $A \cong A_1 \times \cdots \times A_q   \oplus J(A)$, $r= \dim_{F}(J(A))$. For any $u \in \mathbb{N}$ consider the associated algebra
$$\mathcal{A}_{u} : = \frac{A_{ss} \ast F\{ b_1, \ldots, b_r \}}{\langle b_1, \ldots, b_r \rangle^{u+1}_{A_{ss} \ast F \{b_1, \ldots, b_r \}}}.$$

The case where $u = \mbox{nildeg}(J(A))-1$ is of special interest. We denote  $\mathcal{A}= \mathcal{A}_{\mbox{nildeg}(J(A))-1}$.

\begin{lemma}\label{finite dimension of associated algebra}The algebra $\mathcal{A}$ is finite dimensional. Moreover, if the algebra $A$ is basic then $\mathcal{A}$ is also basic.

\end{lemma}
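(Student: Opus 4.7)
My plan has two independent components. For the finite dimensionality claim, I would write down the obvious spanning set of the free product $A_{ss} \ast F\{b_1, \ldots, b_r\}$: every element is an $F$-linear combination of alternating words $a_0 b_{i_1} a_1 b_{i_2} a_2 \cdots b_{i_k} a_k$ with $a_j \in A_{ss}$. In the quotient $\mathcal{A}$ every such word with $k \geq u+1$ is killed, so
$$\dim_F \mathcal{A} \;\leq\; \sum_{k=0}^{u} (\dim_F A_{ss})^{k+1}\, r^{k} \;<\; \infty.$$

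For the ``basic'' claim the strategy is to verify $\kappa_\mathcal{A} = \Par(\mathcal{A})$ and invoke Theorem~\ref{th: basis iff par= index}. The first step is to compute $\Par(\mathcal{A})$ explicitly. Let $I \subset \mathcal{A}$ be the image of the two-sided ideal generated by $b_1, \ldots, b_r$. By construction $I^{u+1}=0$, and the quotient $\mathcal{A}/I \cong A_{ss}$ is semisimple, so $I = J(\mathcal{A})$ and $\mathcal{A}_{ss} \cong A_{ss}$. To pin down the nilpotency degree I need $I^{u} \neq 0$; the natural candidate witness is $b_1^{u}$, and I would justify its survival in $\mathcal{A}$ by observing that it is not in the defining ideal $\langle b_1, \ldots, b_r \rangle^{u+1}_{A_{ss} \ast F\{b_1, \ldots, b_r\}}$. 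This gives $\Par(\mathcal{A}) = (\dim_F A_{ss},\, u) = \Par(A)$.

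The second step is to produce a surjection $\pi \colon \mathcal{A} \twoheadrightarrow A$: fix a basis $j_1, \ldots, j_r$ of $J(A)$ and use the universal property of the free product to send $A_{ss} \hookrightarrow A$ and $b_i \mapsto j_i$. Since $J(A)^{u+1}=0$, this homomorphism kills the defining ideal of $\mathcal{A}$ and factors through $\mathcal{A}$. The existence of $\pi$ yields $\Id(\mathcal{A}) \subseteq \Id(A)$, so every multilinear nonidentity of $A$ (with its alternating sets) is also a nonidentity of $\mathcal{A}$; consequently $\kappa_\mathcal{A} \geq \kappa_A$.

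Now I combine everything. The general inequality $\kappa_\mathcal{A} \leq \Par(\mathcal{A})$ holds for any finite dimensional algebra, and by Theorem~\ref{th: basis iff par= index} applied to the basic algebra $A$ we have $\kappa_A = \Par(A)$. Therefore
$$\Par(A) \;=\; \kappa_A \;\leq\; \kappa_\mathcal{A} \;\leq\; \Par(\mathcal{A}) \;=\; \Par(A),$$
forcing $\kappa_\mathcal{A} = \Par(\mathcal{A})$ and hence $\mathcal{A}$ is basic by the same theorem. The only delicate point in the whole argument is the explicit verification that relations in the free product do not collapse the word $b_1^{u}$, so that $J(\mathcal{A})^{u} \neq 0$; everything else is a formal manipulation of universal properties and Kemer indices.
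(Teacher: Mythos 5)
Your argument is correct and follows essentially the same route as the paper: the same spanning-set count for finite dimensionality, the identification $J(\mathcal{A})=\langle b_1,\ldots,b_r\rangle$ with $\mathcal{A}/J(\mathcal{A})\cong A_{ss}$, the surjection $\mathcal{A}\twoheadrightarrow A$ giving $\kappa_A\leq\kappa_{\mathcal{A}}$, and the sandwich $\Par(A)=\kappa_A\leq\kappa_{\mathcal{A}}\leq\Par(\mathcal{A})=\Par(A)$. The one point you call delicate is fine as you state it (the free product is graded by total degree in the $b_i$, so $b_1^{u}$ cannot lie in the ideal spanned by words of $b$-degree at least $u+1$), and it can also be bypassed entirely: your surjection $\pi$ sends $J(\mathcal{A})^{u}$ onto $J(A)^{u}\neq 0$.
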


\begin{proof}
Choose a basis $\Phi$ of $A_{ss}$ (e.g. the elementary matrices of the simple components $A_{j}$). Consider nonzero monomials in $A_{ss} \ast F\{ b_1, \ldots, b_r \}$. These are words of the form

$$
a_{i_1}b_{i_1}a_{i_2}b_{i_2}\cdots a_{i_k}b_{i_k}a_{i_{k+1}}
$$
where $k \geq 0$, $a_{i_j} \in \Phi$ and $b_{i_j} \in X$. By definition of $\mathcal{A}$, monomials are zero in $\mathcal{A}$ if $k > u$ and hence their number is finite. This proves the first part of the lemma. For the second part, note that $A_{ss}$ is a maximal semisimple subalgebra which supplements the radical $J(\mathcal{A})$. Moreover, the radical is generated by the variables $b_{i}$ and its nilpotency degree equals $nildeg(J(A))$. It follows that $\Par(\mathcal{A})=(d, u)$. But the algebra $A$ is a quotient of $\mathcal{A}$ and hence its Kemer index is at least the Kemer index of $A$. This implies the Kemer index of $\mathcal{A}$ equals $\Par(\mathcal A)=(d, u)$ and the result follows.

\end{proof}

\begin{remark}

It turns out that in the lemma above, the condition on the algebra $A$ being basic is not necessary. Let $A_{ss}$ be a semisimple algebra with $q$ simple components. It was proved by the $3$rd named author of this article that the algebra $\mathcal{A}_{u}$, $u \geq q$ is basic. This will appear in a subsequent paper authored by him.

\end{remark}

\section{Giambruno Conjecture} \label{sec: proof Giam Conj}
During this entire section we consider a basic $F$-algebra $A$ whose Wedderburn-Malcev decomposition is given by
$$
A=A_{ss}\oplus J(A),
$$
where $A_{ss} = A_1 \times \cdots \times A_q$ is a product of matrix algebras, say $A_i \cong M_{d_i}(F)$, $i=1,\ldots,q.$ Next, denote by $\kappa_A = (d,s)$ the Kemer index of $A$. In particular, by Theorem \ref{th: basis iff par= index}, $d = d^{2}_1 + \cdots + d^{2}_q$ and $s = \mbox{nildeg}(J(A)) -1$ where $\mbox{nildeg}(J(A))$ is the nilpotency degree of $J(A)$.

The proof consists of two parts, namely we show $(q-d)/2 +s$ bounds from above and from below $t(A)$. For the upper bound observe that $\id(\mathcal{A}) \subseteq \id(A)$. Also, the algebras $A$ and $\mathcal{A}$ have the same Kemer index and moreover have isomorphic semisimple subalgebras supplementing the corresponding radicals. In particular they have the same exponent. It follows that $t(A) \leq t(\mathcal{A})$ and hence it is sufficient to show $t(\mathcal{A}) \leq (q-d)/2 +s$.

\subsection{Upper bound} \label{sec: upper bound}

As remarked above we need to show $t(\mathcal{A}) \leq (q-d)/2 +s$ where $\mathcal{A}$ is the basic algebra
\[
\mathcal{A}=\frac{A_{ss}\ast F\{b_{1},\ldots,b_{\dim_{F}J(A)}\}}{\left\langle b_{1},\ldots,b_{\dim_{F}J(A)}\right\rangle ^{s+1}}.
\]

\subsubsection{Some reductions}

First, we present a preferred basis for $\mathcal{A}$. For
$1 \leq l \leq q$ we denote the matrix units of $A_{l}$ by $e_{j_{1},j_{2}}(A_{l})$
and $e_{j_{1}}(A_{l})=e_{j_{1},j_{1}}(A_{l})$, $1 \leq j_{1}, j_{2} \leq d_{l}$. Next, for $1 \leq k, k^{\prime} \leq q$, $1 \leq i \leq d_{k}$ and $1 \leq j \leq d_{k^{\prime}}$  let
\[
W_{i,j}(A_{k},A_{k^{\prime}})=\{e_{i,j_{0}}(A_{k})b_{l_{0}}e_{i_{1},j_{1}}(A_{k_{1}})b_{l_{1}}\cdots e_{i_{s^{\prime}},j_{s^{\prime}}}(A_{k_{s^{\prime}}})b_{l_{s^{\prime}}}e_{i_{s^{\prime}+1},j}(A_{k^{\prime}})|0\leq s^{\prime}\leq s\}.
\]
Note that in the expression above, the indices $j_{0}$ and $i_{s^{\prime}+1}$ run over the sets $\{1,\ldots,d_k\}$ and $\{1,\ldots,d_{k^{\prime}}\}$ respectively, the indices $i_p,j_p$ run over the set $\{1,\ldots,d_p\}$, $p = 1,\ldots,s^{\prime}$, and $l_\nu$, $\nu= 0,\ldots, s^{\prime}$, runs over the set $\{0,\ldots, dim_{F}J(A)$\}.

We denote by $W$ the union of the sets $W_{i,j}(A_{k_{1}},A_{k_{2}})$, $1 \leq k_{1}, k_{2} \leq q$, $1 \leq i \leq d_{k_1}$, $1 \leq j \leq d_{k_2}$.
Thus, \underline{a basis for $\mathcal{A}$} is the set
\[
\left\{ e_{j_{1},j_{2}}(A_{l})\,|\, 1 \leq l \leq q, \,1\leq j_{1},j_{2}\leq d_{l}\right\} \cup W.
\]

Since $\mathcal{A}$ is a finite dimensional algebra by proposition \ref{finite dimension of associated algebra}, we can identify
its relatively free algebra $F \langle X_i \mid i \in \mathbb{N} \rangle/ \Id(\mathcal{A})$ with a subalgebra of
\[
\mathcal{A}_{K}=\mathcal{A}\otimes_{F}K,
\]
where $K$ is the (commutative) polynomial ring
\[
K=F\left[\theta_{j_{1},j_{2}}^{(i)}(A_{l}),\theta^{(i)}(w)\,|\, i\in\mathbb{N},\, 1 \leq l \leq q,\,1\leq j_{1},j_{2}\leq d_{l},\, w\in W\right].
\]
Let us make this identification explicit. Write,
\[
X_{i}(A_{k})=\sum_{a_{1},a_{2}}\theta_{a_{1},a_{2}}^{(i)}(A_{k})e_{a_{1},a_{2}}(A_{k}).
\]
Then the variable $X_{i}$ of the relatively free algebra of $\mathcal{A}$ is identified with

\[
\sum_{k=1}^{q}X_{i}(A_{k})+\sum_{w\in W}\underset{X_{i}(w)}{\underbrace{\theta^{(i)}(w)w}}=\sum_{\Sigma}X_{i}(\Sigma)\in\mathcal{A}_{K},
\]
where $\Sigma$ is a symbol which runs over the set $\mathbf{Symb}=W\cup\left(\mathbf{SimComp}:=\left\{ A_{1},\ldots,A_{q}\right\} \right)$.
As a result of this identification, the spaces $P_{n}(\mathcal{A})$
are viewed as subspaces of $\mathcal{A}_{K}$.

We decompose $P_{n}(\mathcal{A})$ into subspaces as follows. Consider a monomial $X_{\sigma(1)}\cdots X_{\sigma(n)}\in P_{n}(\mathcal{A})$,
where $\sigma\in S_{n}$. Clearly, by the identification we just described, the corresponding monomial in $\mathcal{A}_K$ is equal to the sum
\begin{equation}
\sum_{\Sigma_{1},\ldots,\Sigma_{n}\in\mathbf{Symb}}X_{\sigma(1)}(\Sigma_{1})\cdots X_{\sigma(n)}(\Sigma_{n}).\label{SumOfAllPaths}
\end{equation}
Note that
\begin{enumerate}
\item $X_{i}(A_{k})X_{j}(A_{k^{\prime}})=0$ if $k\neq k^{\prime}$.
\item If more than $s$ symbols from $\Sigma_{1},\ldots,\Sigma_{n}$ are radical
(i.e. from $W$), then
\[
X_{\sigma(1)}(\Sigma_{1})\cdots X_{\sigma(n)}(\Sigma_{n})=0.
\]

\end{enumerate}
This leads to the following definition.
\begin{definition}
A sequence $\overrightarrow{p}=(p_{1},\ldots,p_{n})$ of symbols in $\mathbf{Symb}$ is called
a \emph{path} (of length $n$) if the following two properties are satisfied:
\begin{enumerate}
\item If $p_{i},p_{i+1}\in\mathbf{SimComp}$, then $p_{i}=p_{i+1}$.
\item Not more than $s$ symbols (from $p_{1},\ldots,p_{n}$) are in $W$.
\end{enumerate}

Furthermore, suppose $\overrightarrow{p}=(A_{k_{1}},\ldots,A_{k_{1}},w_{1},A_{k_{2}},\ldots,A_{k_{2}},w_{2},\ldots,w_{s^{\prime}},A_{k_{s^{\prime}+1}},\ldots,A_{k_{s^{\prime}+1}})$,
then $\mathbf{struc}(\overrightarrow{p})$, the \emph{path structure} of $\overrightarrow{p}$,
is defined to be the sequence $(A_{k_{1}},w_{1},A_{k_{2}},w_{2},\ldots,w_{s^{\prime}},A_{k_{s^{\prime}+1}})$ (i.e. we record the simple components with no adjacent repetitions and the radical elements).
Two paths $\overrightarrow{p_{1}},\overrightarrow{p_{2}}$ (of the
same length) are called \emph{equivalent }(denoted by $\overrightarrow{p_{1}}\sim\overrightarrow{p_{2}}$)
if they have the same path structure (e.g. $(A_{k_{1}}, A_{k_{1}}, w_{1}, A_{k_{2}})$ and $(A_{k_{1}}, w_{1}, A_{k_{2}}, A_{k_{2}})$ are equivalent paths whereas $(A_{k_{1}}, A_{k_{1}}, w_{1}, A_{k_{2}})$ and $(A_{k_{1}}, w_{1}, A_{k_{2}})$ are not).
\end{definition}

The set of all paths of length $n$ is denoted by $\mathbf{Path}_{n}$
and the set of all equivalence classes of paths of length $n$ is denoted by $\mathbf{Path}_{n}/\sim$.
\begin{definition}
For a given path $\overrightarrow{p} \in \mathbf{Path}_{n}$ we denote the number of appearances of
a symbol $\Sigma$ from $\mathbf{Symb}$ by $\overrightarrow{p}(\Sigma)$.
\end{definition}
By definition, the expression in  (\ref{SumOfAllPaths}) can now be rewritten as (indeed, we ignore some vanishing monomials)
\[
\sum_{\overrightarrow{p}= (p_1,\ldots,p_n)\in\mathbf{Path}_{n}}X_{\sigma(1)}(p_{1})\cdots X_{\sigma(n)}(p_{n})=\sum_{[\overrightarrow{p_{1}}]\in\mathbf{Path}_{n}/\sim}\left(\sum_{\overrightarrow{p} = (p_1,\ldots,p_n)\in[\overrightarrow{p_{1}}]}X_{\sigma(1)}(p_{1})\cdots X_{\sigma(n)}(p_{n})\right).
\]

\begin{definition}
For a fixed $\overrightarrow{p} \in \mathbf{Path}_{n}$ denote by $P_{\overrightarrow{p}}(\mathcal{A})$ the $F$-linear span
of all monomials $X_{\sigma(1)}(p_{1})\cdots X_{\sigma(n)}(p_{n})$,
where $\sigma$ varies over $S_{n}$. Furthermore, $P_{[\overrightarrow{p_1}]}(\mathcal{A})$
denotes the sum of all $P_{\overrightarrow{p}}(\mathcal{A})$
such that $\overrightarrow{p}\sim\overrightarrow{p_1}$.
\end{definition}

\begin{lemma}
\label{lem:EquiPathsApproximation}The space $P_{n}(\mathcal{A})$
is embedded in
\[
\bigoplus_{\overrightarrow{p}\in\mathbf{Path}_{n}}P_{\overrightarrow{p}}(\mathcal{A})=\bigoplus_{[\overrightarrow{p}]\in\mathbf{Path}_{n}/\sim}P_{[\overrightarrow{p}]}(\mathcal{A}).
\]
As a result,
\[
c_{n}(A)\leq\sum_{[\overrightarrow{p}]\in\mathbf{Path}_{n}/\sim}\dim_{F}P_{[\overrightarrow{p}]}(\mathcal{A}).
\]

Moreover, the size of the set $\mathbf{Path}_{n}/\sim$ is bounded
by a constant independent of $n$.
\end{lemma}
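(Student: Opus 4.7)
The plan is to exploit the generic-element realization of the relatively free algebra of $\mathcal{A}$ inside $\mathcal{A}_K$ and read off a path-indexed block decomposition using algebraic independence of the coefficient variables. First, since $\mathcal{A}$ is finite dimensional by Lemma \ref{finite dimension of associated algebra}, the substitution map $F\langle X\rangle \to \mathcal{A}_K$ sending $X_i \mapsto \sum_{\Sigma\in\mathbf{Symb}} X_i(\Sigma)$ has kernel exactly $\Id(\mathcal{A})$, so $P_n(\mathcal{A})$ embeds faithfully into $\mathcal{A}_K$. Under this identification each multilinear monomial $X_{\sigma(1)}\cdots X_{\sigma(n)}$ expands as the sum (\ref{SumOfAllPaths}) over all symbol sequences; the two observations $X_i(A_k)X_j(A_{k'})=0$ (for $k\neq k'$) and vanishing of any product containing more than $s$ radical symbols together kill every sequence that fails to be a path. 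This gives the inclusion $P_n(\mathcal{A}) \subseteq \sum_{\overrightarrow{p}\in\mathbf{Path}_n} P_{\overrightarrow{p}}(\mathcal{A})$.

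The crux is to verify that this sum is \emph{direct}, and this is where the polynomial ring $K$ earns its keep. Inspect a typical monomial $X_{\sigma(1)}(p_1)\cdots X_{\sigma(n)}(p_n)$: apart from its $\mathcal{A}$-part it carries a product of coefficient variables, one factor $\theta^{(\sigma(i))}(p_i)$ when $p_i\in W$ and a product of factors $\theta^{(\sigma(i))}_{a,b}(A_k)$ when $p_i=A_k$, all living in $K$. Since the $\theta$-variables are algebraically independent, distinct symbol sequences $(p_1,\ldots,p_n)$ produce monomials involving genuinely distinct $K$-monomials as coefficients, so no cancellation across different $\overrightarrow{p}$ is possible. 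Hence the sum is direct and the claimed embedding into $\bigoplus_{\overrightarrow{p}} P_{\overrightarrow{p}}(\mathcal{A})$ follows; regrouping by equivalence class $[\overrightarrow{p}]$ is purely cosmetic.

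The codimension bound then drops out: since $\Id(\mathcal{A})\subseteq \Id(A)$ we have $c_n(A)\leq c_n(\mathcal{A}) = \dim_F P_n(\mathcal{A})$, and the direct sum immediately yields $c_n(A)\leq \sum_{[\overrightarrow{p}]}\dim_F P_{[\overrightarrow{p}]}(\mathcal{A})$. For the final assertion every equivalence class is pinned down by a path structure, namely an alternating string $A_{k_1}, w_1, A_{k_2}, \ldots, w_{s'}, A_{k_{s'+1}}$ with $s' \leq s$, each $A_{k_j}$ chosen from the $q$ simple components of $A_{ss}$ and each $w_j$ chosen from the finite set $W$. The number of such structures is bounded by $\sum_{s'=0}^{s} q^{s'+1}|W|^{s'}$, a constant depending only on $\mathcal{A}$ and $s$ and not on $n$. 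The only real substance of the proof is the directness of the path decomposition, which reduces to linear independence of $\theta$-monomials in $K$; everything else is immediate bookkeeping.
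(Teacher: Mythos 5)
Your overall route is the paper's: the authors regard the embedding and the resulting dimension inequality as immediate consequences of the identification of $P_n(\mathcal{A})$ inside $\mathcal{A}_K$ and of the two vanishing observations, and they only write out the counting, which you do essentially the same way (a path structure is a sequence of length at most $2s+1$ over the finite set $\mathbf{Symb}$). Two points deserve comment, one substantive and one cosmetic.

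The substantive one concerns the step you call the only real substance, namely directness of $\sum_{\overrightarrow{p}}P_{\overrightarrow{p}}(\mathcal{A})$ via ``genuinely distinct $K$-monomials''. This is not correct as stated: $K$ is commutative, so the coefficient monomial attached to a term of $X_{\sigma(1)}(p_1)\cdots X_{\sigma(n)}(p_n)$ records which symbol each variable $X_i$ was substituted into, but not the order of the positions. Concretely, take $w\in W_{i,i}(A_1,A_1)$; then the expansion of $X_1(A_1)X_2(A_1)X_3(w)$ (path $(A_1,A_1,w)$, $\sigma=\mathrm{id}$) and the expansion of $X_1(A_1)X_3(w)X_2(A_1)$ (the inequivalent path $(A_1,w,A_1)$, $\sigma=(2\,3)$) both contain the term $\theta^{(1)}_{a_1,i}\theta^{(2)}_{i,i}\theta^{(3)}(w)$ times the same basis element of $\mathcal{A}$. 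So ``no cancellation across different $\overrightarrow{p}$'' does not follow from distinctness of the $\theta$-monomials alone; one would have to bring in the $\mathcal{A}$-component together with the subscripts of the $\theta$'s to salvage directness. Fortunately nothing downstream needs directness: the containment $P_n(\mathcal{A})\subseteq\sum_{\overrightarrow{p}}P_{\overrightarrow{p}}(\mathcal{A})$, which you do establish correctly, already yields $\dim_F P_n(\mathcal{A})\leq\sum_{[\overrightarrow{p}]}\dim_F P_{[\overrightarrow{p}]}(\mathcal{A})$ because the dimension of a (not necessarily direct) sum of subspaces is at most the sum of the dimensions, and with $\Id(\mathcal{A})\subseteq\Id(A)$ this is the inequality the lemma is used for. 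The cosmetic point: your count $\sum_{s'=0}^{s}q^{s'+1}|W|^{s'}$ presumes strictly alternating path structures and so omits the monomial types of Remark \ref{There are other type of monomials} (adjacent radical symbols, or paths beginning or ending in $W$); the paper's cruder bound $\sum_{t=1}^{2s+1}|\mathbf{Symb}|^{t}$ covers all cases, and in any event the conclusion that the count is a constant independent of $n$ stands.
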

\begin{proof}
Only the last part requires an explanation. Indeed, the size of $\mathbf{Path}_{n}/\sim$
is bounded from above by the number of sequences of length at most
$2s+1$ whose elements are taken from the finite set $\mathbf{Symb}$.
So the constant can be taken to be
\[
\sum_{t=1}^{2s+1}|\mathbf{Symb}|^{t}.
\]
\end{proof}
\begin{remark}
By the previous Lemma, it is sufficient to show that $\dim_{F}P_{[\overrightarrow{p}]}(\mathcal{A})$, $\overrightarrow{p} \in \mathbf{Path}_{n}$,
is bounded from above by $Cn^{\frac{q-d}{2}+s}d^{n}$, where $C$
is some constant independent of $n$..
\end{remark}
We intend to decompose each $P_{[\overrightarrow{p}]}(\mathcal{A})$
into a (direct) sum of some special subspaces.

\begin{definition}
Let $\overrightarrow{p} = (p_1,\ldots,p_n)$ be a path in $\mathbf{Path}_{n}$ and let $Z=X_{\sigma(1)}(p_{1})\cdots X_{\sigma(n)}(p_{n})$
be a monomial in $P_{\overrightarrow{p}}(\mathcal{A})$ for some $\sigma \in S_{n}$. For $1\leq l\leq q$
we denote by $\mathbf{ind}_{l}(Z)$ the \uline{set} of all indices
$\sigma(u)$ (here $1\leq u\leq n$) for which $p_{u}=A_{l}$.

Furthermore, we denote by $\mathbf{seq}_{rad}(Z)$ the \uline{sequence}
of indices $(\sigma(i_{1}),\ldots,\sigma(i_{s^{\prime}}))$ for which
\begin{enumerate}
\item $p_{i_{u}}\in W$ for every $1\leq u\leq s^{\prime}$.
\item $i_{1}<\cdots<i_{s^{\prime}}$.
\item $\{\sigma(i_{1}),\ldots,\sigma(i_{s^{\prime}})\}\cup\mathbf{ind}_{1}(Z)\cup\cdots\cup\mathbf{ind}_{q}(Z)=\{1,\ldots,n\}$
(that is $\mathbf{seq}_{rad}(Z)$ consists of all the indices whose
corresponding variables take values in the radical).
\end{enumerate}

Finally, we set $\overrightarrow{\mbox{ind}}(Z)=(\mathbf{ind}_{1}(Z),\ldots,\mathbf{ind}_{q}(Z);\mathbf{seq}_{rad}(Z))$.

\end{definition}

\begin{definition}
Two monomials $Z_{1}$ and $Z_{2}$ in $P_{[\overrightarrow{p}]}(\mathcal{A})$
are \emph{equivalent }(or $Z_{1}\sim Z_{2}$) if $\overrightarrow{\mbox{ind}}(Z_{1})=\overrightarrow{\mbox{ind}}(Z_{2})$.
The set of all equivalence classes corresponding to this relation is denoted by $\mathbf{Mon}_{[\overrightarrow{p}]}/\sim$, where $\mathbf{Mon}_{[\overrightarrow{p}]}$ is the set of monomials in $P_{[\overrightarrow{p}]}(\mathcal{A})$. Furthermore,
$P_{[Z]}(\mathcal{A})(\subseteq P_{[\overrightarrow{p}]}(\mathcal{A}))$
denotes the $F$-span of all monomials in $P_{[\overrightarrow{p}]}(\mathcal{A})$
which are equivalent to $Z$.
\end{definition}

\begin{lemma}
\label{lem:MonomialEquivalenceApproximation}The following hold:
\begin{enumerate}
\item $P_{[\overrightarrow{p}]}(\mathcal{A})$ is equal to
\[
\bigoplus_{[Z]\in\mathbf{Mon}_{[\overrightarrow{p}]}/\sim}P_{[Z]}(\mathcal{A}).
\]

\item Denote by $\mathbf{Mon}_{[\overrightarrow{p}]}(n_{1},\ldots,n_{q})/\sim$
the subset of $\mathbf{Mon}_{[\overrightarrow{p}]}/\sim$ consisting
of all $[Z]$ for which the corresponding path $\overrightarrow{p}$
satisfies $n_{1}=|\mathbf{ind}_{1}(Z)|,\ldots,n_{q}=|\mathbf{ind}_{q}(Z)|$.
Then, $\mathbf{Mon}_{[\overrightarrow{p}]}/\sim$ is equal to the (disjoint)
union
\[
\bigcup_{n_{1}+\cdots+n_{q}= n-s'}\mathbf{Mon}_{[\overrightarrow{p}]}(n_{1},\ldots,n_{q})/\sim,
\]
where $s^{\prime} = |\mathbf{seq}_{rad}(Z)|$ is the number of symbols from $W$ in $\overrightarrow{p}$.

\item The size of $\mathbf{Mon}_{[\overrightarrow{p}]}(n_{1},\ldots,n_{q})/\sim$
is bounded from above by
\[
n^{s^{\prime}}\cdot{n-s^{\prime} \choose n_{1},\ldots,n_{q}}.
\]

\end{enumerate}
\end{lemma}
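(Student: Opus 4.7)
The plan is to handle the three claims in increasing order of subtlety: (2) is immediate from counting indices, (3) is an elementary multinomial estimate, and (1)—the direct sum decomposition—is the main substantive point that relies on the embedding of the relatively free algebra into $\mathcal{A}_K$.

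For (1), I would argue linear independence of the subspaces $P_{[Z]}(\mathcal{A})$ via $K$-coefficients. Expanding a monomial $Z = X_{\sigma(1)}(p_{1}) \cdots X_{\sigma(n)}(p_{n})$ inside $\mathcal{A}_K$, every slot $u$ with $p_u = A_l$ contributes factors of the form $\theta^{(\sigma(u))}_{a,b}(A_l)$, while every slot with $p_u = w \in W$ contributes the factor $\theta^{(\sigma(u))}(w)$. Because the equivalence class $[\overrightarrow{p}]$ fixes the radical symbols $w_1,\ldots,w_{s'}$ appearing in $\mathbf{struc}(\overrightarrow{p})$, every monomial in $P_{[\overrightarrow{p}]}(\mathcal{A})$ uses factors $\theta^{(\sigma(i_j))}(w_j)$ for these fixed $w_j$ and factors $\theta^{(i)}_{*,*}(A_l)$ for $i \in \mathbf{ind}_l(Z)$. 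Thus the set of $\theta$-variables that appears in the $K$-coefficient of any summand of $Z$ is entirely determined by the data $\overrightarrow{\mathrm{ind}}(Z)$. Since distinct data $\overrightarrow{\mathrm{ind}}(Z_1) \neq \overrightarrow{\mathrm{ind}}(Z_2)$ produce coefficients in disjoint subsets of algebraically independent generators of $K$, the subspaces $P_{[Z]}(\mathcal{A})$ cannot interact linearly, and their sum is direct.

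For (2) I note that, given any monomial $Z$ as above, the sets $\mathbf{ind}_1(Z),\ldots,\mathbf{ind}_q(Z)$ together with the underlying set of $\mathbf{seq}_{rad}(Z)$ form a partition of $\{1,\ldots,n\}$; since $s' = |\mathbf{seq}_{rad}(Z)|$ equals the number of radical entries in $\mathbf{struc}(\overrightarrow{p})$ and hence is constant on the class $[\overrightarrow{p}]$, one obtains $n_1 + \cdots + n_q = n - s'$. For (3) I count the tuples $\overrightarrow{\mathrm{ind}}(Z)$ compatible with fixed $(n_1,\ldots,n_q)$: choose disjoint subsets of sizes $n_1,\ldots,n_q$ from $\{1,\ldots,n\}$ in $\binom{n}{n_1,\ldots,n_q,s'}$ ways, then order the remaining $s'$ indices as a sequence in $s'!$ ways. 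The product simplifies to
\[
\frac{n!}{n_1!\cdots n_q!} \;=\; \frac{n!}{(n-s')!}\,\binom{n-s'}{n_1,\ldots,n_q} \;\leq\; n^{s'}\binom{n-s'}{n_1,\ldots,n_q},
\]
which is the stated bound.

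The main obstacle is (1): one must make sure that the combinatorial bookkeeping encoded by $\overrightarrow{\mathrm{ind}}(Z)$ really is faithfully reflected in the $K$-coefficients after the potentially tangled multiplication inside $\mathcal{A}_K$. Concretely, one has to verify that even though different $\overrightarrow{p} \sim \overrightarrow{p}'$ in the class $[\overrightarrow{p}]$ redistribute the semisimple slots, the $\theta$-labels attached to a given index $i \in \mathbf{ind}_l(Z)$ are always drawn from the same family $\{\theta^{(i)}_{a,b}(A_l)\}$, so that no cancellation across different $\overrightarrow{\mathrm{ind}}$ classes is possible. Parts (2) and (3) are then just combinatorial consequences of the definitions.
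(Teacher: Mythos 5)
Your treatment of part (3) is essentially the paper's own proof: the paper first chooses and orders the $s'$ radical indices ($s'!\binom{n}{s'}$ ways) and then distributes the remaining $n-s'$ indices among the simple components ($\binom{n-s'}{n_1,\ldots,n_q}$ ways); you make the two choices in the opposite order, but both computations land on $\tfrac{n!}{n_1!\cdots n_q!}=\tfrac{n!}{(n-s')!}\binom{n-s'}{n_1,\ldots,n_q}\leq n^{s'}\binom{n-s'}{n_1,\ldots,n_q}$. Part (2) is fine. The paper explicitly says only part (3) requires proof, so your argument for (1) is additional material --- and it contains a real gap.

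The gap is in the claim that $\overrightarrow{\mathrm{ind}}(Z_1)\neq\overrightarrow{\mathrm{ind}}(Z_2)$ forces the $K$-coefficients to involve disjoint sets of generators. This fails when the same basis element $w\in W$ occupies two different radical slots of $\mathbf{struc}(\overrightarrow{p})$, which is realizable once $s\geq 2$ (e.g.\ $w=e_{1,1}(A_1)b_1e_{1,1}(A_1)$ with structure $(A_1,w,A_1,w,A_1)$). Since the $\theta$'s are central, the monomials $\cdots X_i(w)\cdots X_j(w)\cdots$ and $\cdots X_j(w)\cdots X_i(w)\cdots$ (with identical semisimple slots) are literally the \emph{same} element $\theta^{(i)}(w)\theta^{(j)}(w)\cdot(\cdots w\cdots w\cdots)$ of $\mathcal{A}_K$, yet they have different $\mathbf{seq}_{rad}$ and hence lie in different classes $[Z]$; so the sum is not direct there, and no coefficient argument can make it so. Your support argument does handle every other discrepancy correctly (an index moved between simple components, between semisimple and radical, or between \emph{distinct} radical basis elements). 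The defect is harmless for the paper's purposes, because Lemma \ref{lem:MonomialEquivalenceApproximation} enters Theorem \ref{thm:UpperBound} only through the inequality $\dim_F P_{[\overrightarrow{p}]}(\mathcal{A})\leq\sum_{[Z]}\dim_F P_{[Z]}(\mathcal{A})$, which needs only that the $P_{[Z]}(\mathcal{A})$ span $P_{[\overrightarrow{p}]}(\mathcal{A})$ --- but as a proof of the direct-sum statement as written, your step (and indeed the statement itself, in this degenerate configuration) needs the repeated-radical-symbol case to be excluded or the equivalence to be coarsened.
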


\begin{proof}
Only the third part requires a proof. There are $s^{\prime}!\cdot{n \choose s^{\prime}}$
options to choose and order $s^{\prime}$ indices from the set $\{1,\ldots,n\}$, i.e. there are $s^{\prime}!\cdot{n \choose s^{\prime}}$ ways to choose $\mathbf{seq}_{rad}$ for a fixed $1 \leq s^{\prime} \leq s$. From the remaining $n-s^{\prime}$
indices there are ${n-s^{\prime} \choose n_{1},\ldots,n_{q}}$ options
to choose $n_{1}$ which will correspond to $A_{1}$,\ldots, $n_{q}$
which will correspond to $A_{q}$. Finally, it is clear that
\[
s^{\prime}!\cdot{n \choose s^{\prime}}{n-s^{\prime} \choose n_{1},\ldots,n_{q}}\leq n^{s^{\prime}}{n-s^{\prime} \choose n_{1},\ldots,n_{q}}.
\]
\end{proof}

\begin{definition} \label{most common type monomials}
For $\mathbf{\overrightarrow{i}}=(i_{1},\ldots,i_{l})$ we denote the product $X_{i_{1}}(A_j)\cdots X_{i_{l}}(A_j)$ by $\mathbf{X_{\overrightarrow{\mathbf{i}}}}(A_j)$.

Consider monomials in $P_{[Z]}(\mathcal{A})(\subseteq P_{[\overrightarrow{p}]}(\mathcal{A}))$
of the form
\[
\mathbf{X}_{\overrightarrow{\mathbf{i}_{1}}}(A_{k_{1}})X_{\nu_{1}}(w_{1})\mathbf{X}_{\overrightarrow{\mathbf{i}_{2}}}(A_{k_{2}})X_{\nu_{2}}(w_{2})\cdots\mathbf{X}_{\overrightarrow{\mathbf{i}_{s^{\prime}+1}}}(A_{k_{s^{\prime}+1}}),
\]
namely monomials which satisfy

\begin{enumerate}
\item

$\mathbf{struc}(\overrightarrow{p})=(A_{k_{1}},w_{1},A_{k_{2}},w_{2},\ldots, A_{k_{s^{\prime}+1}})$

\item
$\bigcup\limits_{\alpha:k_{\alpha}=l}Set_{\overrightarrow{\mathbf{i}_{\alpha}}}=\mathbf{ind}_{l}(Z)$, where $Set_{x}$ consists of all indices appearing in the vector $x$

\item

$\mathbf{seq}_{rad}(Z)=(\nu_{1},\ldots,\nu_{s^{\prime}})$.
\end{enumerate}

\end{definition}
\begin{remark} \label{There are other type of monomials}
It is important to stress that there exist other type of monomials, namely monomials where some radical elements are adjacent or monomials which start or end by radical elements. As it will be clear below, the treatment of these monomials is similar to the monomials of the type considered in Definition \ref{most common type monomials}.
\end{remark}
Consider the spaces $P_{[Z]}^{j_{0},j_{s^{\prime}+1}}(\mathcal{A}):=e_{j_{0}}(A_{k_{1}})P_{[Z]}(\mathcal{A})e_{j_{s^{\prime}+1}}(A_{k_{s^{\prime}+1}})(\subseteq\mathcal{A}_{K})$,
where $1\leq j_{0}\leq d_{k_{1}}$ and $1\leq j_{s^{\prime}+1}\leq d_{k_{s^{\prime}+1}}$ (recall our notation $e_{j}(B)$, the diagonal matrix $e_{j,j}$ in the matrix algebra $B$). Note that the simple components $A_{k_1}$ and $A_{k_{s^{\prime}+1}}$ are determined by the monomial $Z$ if $P_{[Z]}^{j_{0},j_{s^{\prime}+1}}(\mathcal{A})\neq 0$ and hence we do not record the indices $k_1$ and $k_{s^{\prime}+1}$ in the definition of $P_{[Z]}^{j_{0},j_{s^{\prime}+1}}(\mathcal{A})$.
Since any element $f$ of $P_{[Z]}(\mathcal{A})$ can be written
as the sum
\[
f=1(A_{k_{1}})\cdot f\cdot1(A_{k_{s^{\prime}+1}})=\sum\limits_{\substack{1 \leq  j_{0} \leq d_{k_1} \\ 1\leq j_{s^{\prime}+1} \leq d_{k_{s^{\prime} +1}}}}e_{j_{0}}(A_{k_{1}})\cdot f\cdot e_{j_{s^{\prime}+1}}(A_{k_{s^{\prime}+1}}),
\]
we obtain an injective map
\[
P_{[Z]}(\mathcal{A})\to\bigoplus\limits_{\substack{1 \leq  j_{0} \leq d_{k_1} \\ 1\leq j_{s^{\prime}+1} \leq d_{k_{s^{\prime} +1}}}}P_{[Z]}^{j_{0},j_{s^{\prime}+1}}(\mathcal{A}).
\]

So we have proved
\begin{lemma}
\label{lem:FinalReduction}$\dim_{F}P_{[Z]}(\mathcal{A})\leq\sum\limits_{j_{0},j_{s^{\prime}+1}}\dim_{F}P_{[Z]}^{j_{0},j_{s^{\prime}+1}}(\mathcal{A})$.
\end{lemma}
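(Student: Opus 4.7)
The plan is to exploit the structural constraint built into the definition of the ``most common type'' monomials (Definition \ref{most common type monomials}): every such monomial begins with a variable $X_{i_1}(A_{k_1})$ evaluated inside the simple component $A_{k_1}$ and ends with a variable $X_{i_{s'+1}}(A_{k_{s'+1}})$ evaluated inside $A_{k_{s'+1}}$. In particular, $P_{[Z]}(\mathcal{A})$ lies inside the ``corner'' $1(A_{k_1})\,\mathcal{A}_K\,1(A_{k_{s'+1}})$, where $1(A_{k_j})\in A_{ss}$ denotes the unit of the $j$-th simple component.

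The key identity is the partition of the unit of each simple component into orthogonal diagonal idempotents, $1(A_{k_j}) = \sum_{i=1}^{d_{k_j}} e_i(A_{k_j})$. Applying this on both sides, any $f\in P_{[Z]}(\mathcal{A})$ decomposes as
\[
f \;=\; 1(A_{k_1})\cdot f \cdot 1(A_{k_{s'+1}}) \;=\; \sum_{\substack{1\leq j_0\leq d_{k_1}\\ 1\leq j_{s'+1}\leq d_{k_{s'+1}}}} e_{j_0}(A_{k_1})\,f\,e_{j_{s'+1}}(A_{k_{s'+1}}),
\]
with the $(j_0,j_{s'+1})$-summand lying in $P_{[Z]}^{j_0,j_{s'+1}}(\mathcal{A})$ by definition. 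This yields a linear map
\[
\Phi : P_{[Z]}(\mathcal{A}) \longrightarrow \bigoplus_{j_0,\,j_{s'+1}} P_{[Z]}^{j_0,j_{s'+1}}(\mathcal{A}),\qquad f \longmapsto \bigl(e_{j_0}(A_{k_1})\,f\,e_{j_{s'+1}}(A_{k_{s'+1}})\bigr)_{j_0,j_{s'+1}}.
\]

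The last step is to check that $\Phi$ is injective: if $\Phi(f)=0$, then summing all components back up recovers $f=0$ by the identity displayed above. Injectivity gives the desired dimension bound $\dim_F P_{[Z]}(\mathcal{A}) \leq \sum_{j_0,j_{s'+1}} \dim_F P_{[Z]}^{j_0,j_{s'+1}}(\mathcal{A})$. There is no serious obstacle; the only point requiring care is Remark \ref{There are other type of monomials}, i.e.\ monomial classes $[Z]$ whose path structure begins or ends with a radical symbol (or contains adjacent radical symbols). For these, the same argument applies after replacing $1(A_{k_1})$ or $1(A_{k_{s'+1}})$ by the appropriate idempotent determined by the neighboring simple component in the path structure; the resulting splitting is analogous and yields a bound of exactly the same shape.
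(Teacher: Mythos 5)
Your argument is exactly the paper's: decompose $f = 1(A_{k_1})\cdot f\cdot 1(A_{k_{s'+1}})$ via the orthogonal diagonal idempotents $e_{j_0}(A_{k_1})$, $e_{j_{s'+1}}(A_{k_{s'+1}})$ to obtain an injective linear map into $\bigoplus_{j_0,j_{s'+1}} P_{[Z]}^{j_0,j_{s'+1}}(\mathcal{A})$, whence the dimension inequality. The proposal is correct and matches the paper's proof, including the deferred treatment of the exceptional monomial types from Remark \ref{There are other type of monomials}.
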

As a result of this observation we will fix also the indices $j_{0}$,
$j_{s^{\prime}+1}$ and work in the space $P_{[Z]}^{j_{0},j_{s^{\prime}+1}}(\mathcal{A})$. In Lemma \ref{lem:UpperBoundKey} we describe the asymptotics of $\dim_F P_{[Z]}^{j_{0},j_{s^{\prime}+1}}(\mathcal{A})$ and then, subsequently, we will sum up all spaces of that form.

\subsubsection{The key lemma and upper bound}


In order to be able to carry out manipulations in the vector space
$P_{[Z]}^{j_{0},j_{s^{\prime}+1}}(\mathcal{A})$ we introduce the
following notation:

\[
\theta_{a_{1},\ldots,a_{l+1}}^{X_{1},\ldots,X_{l}}(A_{k}):=\theta_{a_{1},a_{2}}^{(X_{1})}(A_{k})\theta_{a_{2},a_{3}}^{(X_{2})}(A_{k})\cdots\theta_{a_{l},a_{l+1}}^{(X_{l})}(A_{k})
\]
and
\[
\theta_{i|j}^{X_{1},\ldots,X_{l}}(A_{k}):=\sum_{a_{2},\ldots,a_{l}}\theta_{a_{1}=i,a_{2},\ldots,a_{l},a_{l+1}=j}^{X_{1},\ldots,X_{l}}(A_{k}).
\]
Note that we changed slightly the notation we introduced above by replacing $\theta_{a_{k},a_{r}}^{(l)}$ with $\theta_{a_{k},a_{r}}^{(X_{l})}$.

Furthermore, if $\mathbf{\overrightarrow{\nu}}=(1,\ldots,l)$ we simply write
\[
\theta_{i|j}^{X_{\mathbf{\overrightarrow{\nu}}}}(A_{k})=\theta_{i|j}^{X_{1},\ldots,X_{l}}(A_{k}).
\]

The next lemma is straightforward (proof is omitted).
\begin{lemma}
\label{lem:UpperBoundKey}The following hold.
\begin{enumerate}
\item For $w_{1}\in W_{-,i}(A_{-},A_{k}),w_{2}\in W_{j,-}(A_{k},A_{-})$
we have
\[
w_{1}\mathbf{X}_{\mathbf{\overrightarrow{\nu}}}(A_{k})w_{2}=\theta_{i|j}^{X_{\mathbf{\overrightarrow{\nu}}}}(A_{k})\cdot w_{1}e_{i,j}(A_{k})w_{2}.
\]

\item For $w_{1}\in W_{j_{1},\tilde{j}_{1}}(A_{k_{1}},A_{k_{2}}),\ldots,w_{s^{\prime}}\in W_{j_{s^{\prime}},\tilde{j}_{s^{\prime}}}(A_{k_{s^{\prime}}},A_{k_{s^{\prime}+1}})$
we have
\[
\underset{w_{0}}{\underbrace{e_{\tilde{j}_{0}=j_{0}}(A_{k_{1}})}}\left(\mathbf{X}_{\overrightarrow{\mathbf{\nu}_{1}}}(A_{k_{1}})X_{i_{1}}(w_{1})\mathbf{X}_{\overrightarrow{\mathbf{\nu}_{2}}}(A_{k_{2}})X_{i_{2}}(w_{2})\cdots\mathbf{X}_{\overrightarrow{\mathbf{\nu}_{s^{\prime}+1}}}(A_{k_{s^{\prime}+1}})\right)\underset{w_{s^{\prime}+1}}{\underbrace{e_{\tilde{j}_{s^{\prime}+1}=j_{s^{\prime}+1}}(A_{k_{s^{\prime}+1}})}}
\]
 equals to
\[
\theta^{(i_{1})}(w_{1})\cdots\theta^{(i_{s'})}(w_{s^{\prime}})\left(\prod_{l=1}^{q}\prod_{\alpha:k_{\alpha}=l}\theta_{\tilde{j}_{\alpha-1}|j_{\alpha}}^{X_{\mathbf{\overrightarrow{\nu_{\alpha}}}}}(A_{l})\right) \cdot \mathbf{w},
\]
where  $$\mathbf{w} = w_{0}e_{\tilde{j}_{0},j_{1}}(A_{k_{1}})w_{1}e_{\tilde{j}_{1},j_{2}}(A_{k_{2}})\cdots w_{s^{\prime}}e_{\tilde{j}_{s^{\prime}},j_{s^{\prime}+1}}(A_{k_{s^{\prime}+1}})w_{s^{\prime}+1}.$$

\end{enumerate}

\end{lemma}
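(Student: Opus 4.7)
The plan is to verify both parts by direct computation, exploiting the explicit expansion of the generic element $X_i \in \mathcal{A}_K$ into its $\Sigma$-components. Recall that for a matrix symbol $\Sigma = A_k$ we have $X_i(A_k) = \sum_{a_1,a_2} \theta_{a_1,a_2}^{(i)}(A_k)\, e_{a_1,a_2}(A_k)$, while for a radical basis element $w \in W$ we have $X_i(w) = \theta^{(i)}(w)\, w$, with all $\theta$'s living in the commutative polynomial ring $K$.

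For part (1), the plan is to expand the product
\[
\mathbf{X}_{\vec{\nu}}(A_k) = X_1(A_k)\cdots X_l(A_k) = \sum_{a_1,\ldots,a_{l+1}} \theta^{X_1,\ldots,X_l}_{a_1,\ldots,a_{l+1}}(A_k)\, e_{a_1,a_{l+1}}(A_k),
\]
using the matrix multiplication rule $e_{a,b}(A_k)\cdot e_{b',c}(A_k) = \delta_{b,b'}e_{a,c}(A_k)$. Since $w_1 \in W_{-,i}(A_-,A_k)$ ends with a factor $e_{?,i}(A_k)$ and $w_2 \in W_{j,-}(A_k,A_-)$ begins with a factor $e_{j,?}(A_k)$, left multiplication by $w_1$ forces $a_1 = i$ and right multiplication by $w_2$ forces $a_{l+1} = j$. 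The intermediate indices $a_2,\ldots,a_l$ remain free, and their contributions collapse exactly into the scalar $\theta_{i|j}^{X_{\vec{\nu}}}(A_k)$ by the definition of that symbol. The single remaining matrix unit is $e_{i,j}(A_k)$, which combines with $w_1$ and $w_2$ to give the claimed formula.

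For part (2), the strategy is to iterate (1). Each factor $X_{i_\alpha}(w_\alpha) = \theta^{(i_\alpha)}(w_\alpha)\, w_\alpha$ commutes out of the product as a scalar in $K$, contributing the leading factors $\theta^{(i_1)}(w_1)\cdots\theta^{(i_{s'})}(w_{s'})$. With the boundary conventions $w_0 = e_{j_0}(A_{k_1})$ and $w_{s'+1} = e_{j_{s'+1}}(A_{k_{s'+1}})$, the remainder of the monomial decomposes into successive blocks $w_{\alpha-1}\,\mathbf{X}_{\vec{\nu_\alpha}}(A_{k_\alpha})\,w_\alpha$ for $\alpha=1,\ldots,s'+1$, and each is an instance of part (1) with boundary indices $\tilde{j}_{\alpha-1}$ (tail of $w_{\alpha-1}$) and $j_\alpha$ (head of $w_\alpha$). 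Applying (1) block by block produces the scalars $\theta^{X_{\vec{\nu_\alpha}}}_{\tilde{j}_{\alpha-1}\mid j_\alpha}(A_{k_\alpha})$; commutativity of $K$ then permits regrouping them first by simple component $l$ and then over the $\alpha$'s with $k_\alpha = l$, as in the statement. The leftover matrix units splice together with the unchanged radical letters into the word $\mathbf{w}$.

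The only real obstacle is notational bookkeeping: keeping careful track of which index labels the last column of $w_{\alpha-1}$ versus the first row of $w_\alpha$, and confirming that the boundary conventions for $w_0$ and $w_{s'+1}$ do not disturb the pattern. Conceptually, nothing beyond the distributivity of matrix multiplication over the sum defining $X_i(A_k)$ and the commutativity of the scalar ring $K$ is involved, which is why the authors feel entitled to omit the proof.
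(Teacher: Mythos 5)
Your computation is correct, and it is precisely the direct verification the authors have in mind when they omit the proof as ``straightforward'': expand each $X_{\nu}(A_k)$ into matrix units, note that the bordering radical words force the outer row and column indices to be $i$ and $j$ so the free inner indices sum to $\theta_{i|j}^{X_{\vec{\nu}}}(A_k)$, pull the scalars $\theta^{(i_\alpha)}(w_\alpha)$ out by commutativity of $K$, and splice the blocks. Nothing further is needed.
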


\begin{corollary}
There exists an element $w \in A$ such that for any monomial $Z^{\prime} \in P_{[Z]}^{j_{0},j_{s^{\prime}+1}}(\mathcal{A})$  $($and hence for any element of $P_{[Z]}^{j_{0},j_{s^{\prime}+1}}(\mathcal{A})$$)$
$$
Z^{\prime} = f\cdot w
$$
where $f$ is a polynomial in $F\left[\theta_{j_{1},j_{2}}^{(i)}(A_{l}),\theta^{(i)}(w)\,|\, i\in\mathbb{N},\, 1 \leq l \leq q,\,1\leq j_{1},j_{2}\leq d_{l},\, w\in W\right].$
\end{corollary}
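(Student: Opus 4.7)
The plan is to derive the corollary as a direct application of Lemma \ref{lem:UpperBoundKey}(2), once we identify exactly which data the element $\mathbf{w}$ appearing there depends on. The main point to verify is that this element is the \emph{same} for all monomials $Z'$ in $P_{[Z]}^{j_{0},j_{s^{\prime}+1}}(\mathcal{A})$, so it can play the role of the single element $w$ promised by the statement.

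First I would fix an arbitrary monomial
\[
Z' \;=\; e_{j_{0}}(A_{k_{1}}) \cdot X_{\sigma(1)}(p_{1})\cdots X_{\sigma(n)}(p_{n}) \cdot e_{j_{s^{\prime}+1}}(A_{k_{s^{\prime}+1}})
\]
in $P_{[Z]}^{j_{0},j_{s^{\prime}+1}}(\mathcal{A})$ and observe that membership in the class $[Z]$ fixes the following data: the path structure $(A_{k_{1}},w_{1},A_{k_{2}},\ldots,w_{s^{\prime}},A_{k_{s^{\prime}+1}})$, and therefore the specific radical symbols $w_{i}\in W_{j_{i},\tilde{j}_{i}}(A_{k_{i}},A_{k_{i+1}})$ together with their matrix-unit indices $j_{\alpha},\tilde{j}_{\alpha}$; the sequence $\mathbf{seq}_{rad}(Z')=\mathbf{seq}_{rad}(Z)$; and the sets $\mathbf{ind}_{l}(Z')=\mathbf{ind}_{l}(Z)$ for each $l$. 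Grouping consecutive $X_{i}(A_{k_{\alpha}})$ factors into blocks $\mathbf{X}_{\overrightarrow{\mathbf{i}_{\alpha}}}(A_{k_{\alpha}})$ as in Definition \ref{most common type monomials}, Lemma \ref{lem:UpperBoundKey}(2) gives
\[
Z' \;=\; \theta^{(\nu_{1})}(w_{1})\cdots \theta^{(\nu_{s^{\prime}})}(w_{s^{\prime}})\left(\prod_{l=1}^{q}\prod_{\alpha:k_{\alpha}=l}\theta_{\tilde{j}_{\alpha-1}|j_{\alpha}}^{X_{\overrightarrow{\nu_{\alpha}}}}(A_{l})\right)\cdot \mathbf{w},
\]
where $\mathbf{w}=e_{j_{0}}(A_{k_{1}})w_{1}e_{\tilde{j}_{1},j_{2}}(A_{k_{2}})w_{2}\cdots w_{s^{\prime}}e_{\tilde{j}_{s^{\prime}},j_{s^{\prime}+1}}(A_{k_{s^{\prime}+1}})$ depends only on the data just enumerated, and in particular not on $\sigma$ nor on how the indices are distributed among the blocks $\mathbf{X}_{\overrightarrow{\mathbf{i}_{\alpha}}}(A_{k_{\alpha}})$.

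Setting $w:=\mathbf{w}$ then yields the claimed factorization $Z'=f\cdot w$ for every monomial of the type treated in Definition \ref{most common type monomials}. For the remaining monomials flagged in Remark \ref{There are other type of monomials}, namely those starting or ending with a radical variable or containing adjacent radical variables, I would apply the same argument with the obvious variant of Lemma \ref{lem:UpperBoundKey}(2): the extra radical variables contribute additional scalar factors $\theta^{(i)}(w)$ to $f$ and are absorbed into the fixed element $w$ as additional $W$-symbols, but the reasoning is unchanged. Finally, extending by linearity, every element of $P_{[Z]}^{j_{0},j_{s^{\prime}+1}}(\mathcal{A})$ admits a factorization $f\cdot w$ with this common $w$. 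There is no genuine obstacle; the only subtlety worth flagging is verifying that the equivalence relation on monomials is fine enough to pin down every ingredient of $\mathbf{w}$, which is exactly what the definitions of $\overrightarrow{\mbox{ind}}(Z)$ and of path-equivalence ensure.
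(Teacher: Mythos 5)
Your proposal is correct and follows exactly the route the paper intends: the corollary is a direct consequence of part (2) of Lemma \ref{lem:UpperBoundKey}, with the key observation (which you verify) that the element $\mathbf{w}$ there is determined solely by the path structure, the radical symbols with their matrix-unit indices, and the fixed bracketing indices $j_{0},j_{s^{\prime}+1}$ --- all of which are constant across the class $[Z]$ --- so a single $w=\mathbf{w}$ works for every monomial, and linearity finishes the argument. The paper omits the proof, but your treatment, including the remark about the other monomial types from Remark \ref{There are other type of monomials}, is precisely what is meant.
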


We now turn to the construction of the map which enables to estimate $\dim_{F}P_{[Z]}^{j_{0},j_{s^{\prime}+1}}$.

For every $l$, consider the variables $Y_{1,l},\ldots,Y_{v_l-1,l}$, where
$v_{l}$ is the number of appearances of $A_{l}$ in $\mathbf{struc}(\overrightarrow{p})=(A_{k_{1}},w_{1},A_{k_{2}},w_{2},\ldots, A_{k_{s^{\prime}+1}})$.

Let $P_{X_{h_1},\ldots,X_{h_{n_l}}; Y_{1,l},\ldots,Y_{v_{l}-1,l}}$ denote the $(n_{l} + v_{l} - 1)!$ space of all multilinear polynomials in the prescribed variables, where $\mathbf{ind}_{l}(Z) = (h_1,\ldots,h_{n_l})$ (i.e. the indices of variables which get values from $A_{l}$). Let

$$
P_{X_{h_1},\ldots,X_{h_{n_l}}; Y_{1,l},\ldots,Y_{v_{l}-1,l}}(A_{l}) = P_{X_{h_1},\ldots,X_{h_{n_l}}; Y_{1,l},\ldots,Y_{v_{l}-1,l}}/(P_{X_{h_1},\ldots,X_{h_{n_l}}; Y_{1,l},\ldots,Y_{v_{l}-1,l}} \cap Id(A_{l})).
$$

Denote $$(U_{1},\ldots,U_{n_{l} + v_{l} + 1})=(X_{h_1},\ldots,X_{h_{n_l}}, Y_{1,l},\ldots,Y_{v_{l}-1,l})$$ and let $P_{[\widehat{X_{A_{l}}}]}(A_{l}) = P_{[\widehat{X_{A_{l}}}]}/(P_{[\widehat{X_{A_{l}}}]} \cap Id(A_{l}))$ be the subspace of $P_{X_{h_1},\ldots,X_{h_{n_l}}; Y_{1,l},\ldots,Y_{v_{l}-1,l}}(A_{l})$ spanned by all monomials $U_{\tau(1)}U_{\tau(2)}\cdots U_{\tau(n_{l} + v_{l} - 1)}$ where

\begin{enumerate}
\item
$U_{\tau(1)}, U_{\tau(n_{l} + v_{l} - 1)} \in \{ X_{h_1},\ldots,X_{h_{n_l}}\}$

\item

If $i < j$ and $U_{\tau(i)}, U_{\tau(j)} \in \{Y_{1,l},\ldots,Y_{v_{l}-1,l}\}$, then there exists $k$ with $i < k < j$ such that $U_{\tau(k)} \in
\{ X_{h_1},\ldots,X_{h_{n_l}}\}.$

\item

The ordering induced by $\tau$ on the set $\{Y_{1,l},\ldots,Y_{v_{l}-1,l}\}$ is precisely the ordering $(Y_{1,l},\ldots,Y_{v_{l}-1,l}).$ That is, for
$n_l +1 \leq i,j \leq n_{l} + v_{l} - 1$ we have $ i < j \Longrightarrow \tau(i) < \tau(j).$

\end{enumerate}

\begin{remark}
We abuse notation and language here by considering monomials as elements of $P_{[\widehat{X_{A_{l}}}]}(A_{l})$
\end{remark}

\begin{notation}
We denote by $c_{\widehat{X_{A_{l}}}}(A_{l}) = dim_{F}P_{[\widehat{X_{A_{l}}}]}(A_{l})$ and note that $c_{\widehat{X_{A_{l}}}}(A_{l}) \leq c_{n_{l}+v_{l} -1}(A_{l})$.
\end{notation}

Now, for $l=1,\ldots,q$ let

$$\widehat{X_{A_{l}}} = X_{\overrightarrow{\mathbf{\mu}_{\alpha(1,l)}}}\cdot Y_{1,l} \cdot X_{\overrightarrow{\mathbf{\mu}_{\alpha(2,l)}}}\cdot Y_{2,l} \cdots Y_{v_l-1,l}\cdot X_{\overrightarrow{\mathbf{\mu}_{\alpha(v_{l},l)}}},$$
be a monomial in $P_{[\widehat{X_{A_{l}}}]}(A_{l})$

and let

$$X=(\widehat{X_{A_{1}}},\ldots,\widehat{X_{A_{q}}}) \in  P_{[\widehat{X_{A_{1}}}]} \times \cdots \times P_{[\widehat{X_{A_{q}}}]}.$$

Next consider the monomial

\[e_{\tilde{j}_{0}=j_{0}}(A_{k_{1}})X (\mathcal{A})e_{\tilde{j}_{s^{\prime}+1}=j_{s^{\prime}+1}}(A_{k_{s^{\prime}+1}}) \in
P_{[Z]}^{j_{0},j_{s^{\prime}+1}}(\mathcal{A}),\]

where

\[X (\mathcal{A})=
\mathbf{X}_{\overrightarrow{\mathbf{\nu}_{\alpha(t_1,k_1)}}}(A_{k_{1}})X_{i_{1}}(w_{1})\mathbf{X}_{\overrightarrow{\mathbf{\nu}_{\alpha(t_2,k_2)}}}(A_{k_{2}})X_{i_{2}}(w_{2})\cdots\mathbf{X}_{\overrightarrow{\mathbf{\nu}_{\alpha(t_{s^{\prime}+1},k_{s^{\prime}+1})}}}(A_{k_{s^{\prime}+1}})
\]

and if $k_{g_1} = k_{g_2} = \ldots = k_{g_{v_l}} = l$ are the indices where the simple component $A_{l}$ appears in $\mathbf{struc}(\overrightarrow{p})=(A_{k_{1}},w_{1},A_{k_{2}},w_{2},\ldots, A_{k_{s^{\prime}+1}})$, then

\[({\overrightarrow{\mathbf{\nu}_{\alpha(t_{g_1},k_{g_1})}}},\dots,{\overrightarrow{\mathbf{\nu}_{\alpha(t_{g_{v_{l}}},k_{g_{v_l}})}}})
 = ({\overrightarrow{\mathbf{\mu}_{\alpha(1,l)}}},\dots,{\overrightarrow{\mathbf{\mu}_{\alpha(v_{l},l)}}}).\]

\begin{lemma} \label{well defined map}

The following hold.

\begin{enumerate}
\item

The map $\Psi: P_{[\widehat{X_{A_{1}}}]} \times \cdots \times P_{[\widehat{X_{A_{q}}}]} \rightarrow P_{[Z]}^{j_{0},j_{s^{\prime}+1}}(\mathcal{A})$

$$X\mapsto e_{\tilde{j}_{0}=j_{0}}(A_{k_{1}})X (\mathcal{A})e_{\tilde{j}_{s^{\prime}+1}=j_{s^{\prime}+1}}(A_{k_{s^{\prime}+1}})$$ is well defined, surjective and multilinear. Hence it determines a surjection

\[
P_{\widehat{[X_{A_{1}}]}}(A_{1})\otimes\cdots\otimes P_{\widehat{[X_{A_{q}}]}}(A_{q})\to P_{[Z]}^{j_{0},j_{s^{\prime}+1}}(\mathcal{A}).
\]

\item $\dim_{F}P_{[Z]}^{j_{0},j_{s^{\prime}+1}}(\mathcal{A})\leq c_{n_{1}+s}(A_{1})\cdots c_{n_{q}+s}(A_{q})\leq C\cdot c_{n_{1}}(A_{1})\cdots c_{n_{q}}(A_{q})$,
where $C$ is a constant which is independent of $n_{1},\ldots,n_{q}$.

\end{enumerate}

\end{lemma}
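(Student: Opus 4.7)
The plan is to leverage the explicit formula from Lemma \ref{lem:UpperBoundKey}(2), which evaluates any monomial of the form $X(\mathcal{A})$ (sandwiched between the fixed matrix units $e_{j_0}(A_{k_1})$ and $e_{j_{s'+1}}(A_{k_{s'+1}})$) as
\[
\Psi(X) = \left(\prod_{i=1}^{s'} \theta^{(i_i)}(w_i)\right) \cdot \prod_{l=1}^{q}\left(\prod_{\alpha:\, k_\alpha = l}\theta_{\tilde{j}_{\alpha-1}\mid j_\alpha}^{X_{\vec{\nu}_\alpha}}(A_l)\right) \cdot \mathbf{w},
\]
where $\mathbf{w}$ and the boundary indices $\tilde{j}_{\alpha-1}, j_\alpha$ depend only on $[Z]$ and on $j_0, j_{s'+1}$, not on the particular monomials chosen. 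The crucial observation is that, for each $l$, the inner product $\prod_{\alpha:k_\alpha=l}\theta_{\tilde{j}_{\alpha-1}\mid j_\alpha}^{X_{\vec{\nu}_\alpha}}(A_l)$ coincides with a specific matrix entry of the evaluation of the polynomial $\widehat{X_{A_l}}$ on generic matrices in $M_{d_l}(F)$, with the bridge variables $Y_{m,l}$ specialized to the matrix units $e_{j_{g_m}, \tilde{j}_{g_m}}(A_l)$ dictated by the radical elements flanking the $A_l$-blocks.

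From this identification the three properties in part (1) follow essentially for free. Well-definedness: if $\widehat{X_{A_l}}\in\Id(A_l)$, every evaluation in $M_{d_l}(F)$ vanishes, so the $l$-th factor above is zero and $\Psi(X)=0$; hence $\Psi$ descends to $\prod_l P_{[\widehat{X_{A_l}}]}(A_l)$. Multilinearity is visible from the product structure, since each $l$-factor is linear in $\widehat{X_{A_l}}$ alone. Surjectivity: any monomial of $P_{[Z]}^{j_0,j_{s'+1}}(\mathcal{A})$ matching Definition \ref{most common type monomials} is manifestly the image of the tuple obtained by grouping its $A_l$-blocks into $\widehat{X_{A_l}}$, with the $Y_{m,l}$'s inserted at the positions of the radical boundaries; the variant monomials of Remark \ref{There are other type of monomials} are handled identically after absorbing the boundary radical elements into $\mathbf{w}$. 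The universal property of the tensor product then upgrades this multilinear surjection to the asserted surjection onto $P_{[Z]}^{j_0,j_{s'+1}}(\mathcal{A})$.

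For part (2), surjectivity gives
\[
\dim_F P_{[Z]}^{j_0,j_{s'+1}}(\mathcal{A}) \leq \prod_{l=1}^{q} c_{\widehat{X_{A_l}}}(A_l) \leq \prod_{l=1}^{q} c_{n_l+v_l-1}(A_l) \leq \prod_{l=1}^{q} c_{n_l+s}(A_l),
\]
the last step using $v_l-1\leq s'\leq s$ (since there are at most $s'+1$ simple-component blocks in $\mathbf{struc}(\vec p)$, so $v_l\leq s+1$). For the final inequality $c_{n_l+s}(A_l)\leq C_l\,c_{n_l}(A_l)$, we invoke Regev's asymptotic $c_n(M_{d_l}(F))\simeq c_l\, n^{(1-d_l^2)/2}\, d_l^{2n}$, which yields $c_{n+s}(A_l)/c_n(A_l)\to d_l^{2s}$ and so supplies a uniform constant $C_l$ valid for all $n_l$; setting $C=\prod_l C_l$ closes the estimate.

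The only genuinely non-formal step is the bookkeeping identification of the $l$-th factor in the Lemma \ref{lem:UpperBoundKey}(2) formula with an evaluation of $\widehat{X_{A_l}}$ in $M_{d_l}(F)$ — matching the indices $g_m$ at which $A_l$ appears in $\mathbf{struc}(\vec p)$ with the $Y_{m,l}$-placeholders and verifying that the enforced matrix-unit specializations are exactly those produced by the adjacent radical symbols $w_{g_m-1}$ and $w_{g_m}$. Once this correspondence is made explicit, the proof reduces to the universal property of tensor products plus the dimensional estimate above.
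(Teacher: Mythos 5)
Your proposal is correct and follows essentially the same route as the paper: well-definedness via Lemma \ref{lem:UpperBoundKey}(2) by identifying the $A_l$-factor with a matrix entry of the evaluation of $\widehat{X_{A_l}}$ with the $Y_{m,l}$ specialized to the bridging matrix units forced by the adjacent radical symbols, surjectivity and multilinearity by construction, and the dimension bound via $c_{\widehat{X_{A_l}}}(A_l)\leq c_{n_l+v_l-1}(A_l)\leq c_{n_l+s}(A_l)$ together with the bounded ratio $c_{n+s}(A_l)/c_n(A_l)$ from Regev's asymptotics. The only cosmetic difference is that the paper explicitly cites the (eventual) monotonicity of $c_n(A_l)$ for the step $c_{n_l+v_l-1}(A_l)\leq c_{n_l+s}(A_l)$, which you leave implicit (harmless, since each $A_l$ is a unital matrix algebra).
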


\begin{proof}
Suppose $f$ is a linear combination of monomials in $P_{[\widehat{X_{A_{l}}}]}(A_{l})$ which represents the zero element. Clearly, $f$ represents the zero map in $Hom(A_{l}^{\otimes (n_{l} + v_{l}-1)}, A_{l})$ and hence, evaluating $Y_{1,l},\ldots,Y_{v_{l}-1,l}$ on $A_{l}$ we obtain the zero map in $Hom(A_{l}^{\otimes n_{l}}, A_{l})$. In particular we obtain zero
if we evaluate
$$Y_{1,l} = e_{j_{g_1},\tilde{j}_{g_2}-1}(A_{l}),Y_{2,l} = e_{j_{g_2},\tilde{j}_{g_3}-1}(A_{l}),\ldots,Y_{v_{l}-1,l} = e_{j_{g_{v_l}-1},\tilde{j}_{g_{v_l}}-1}(A_{l}).$$

But in view of the fact that $$w_{g_1} \in W_{j_{g_1},\tilde{j}_{g_1}}(A_{k_{g_1}},A_{k_{g_1+1}}), w_{g_2} \in W_{j_{g_2},\tilde{j}_{g_2}}(A_{k_{g_2}},A_{k_{g_2+1}}),\ldots,$$
$$w_{g_{v_l-1}} \in W_{j_{g_{v_l-1}},\tilde{j}_{g_{v_l}}}(A_{k_{g_{v_l-1}}},A_{k_{g_{v_l}}}),w_{g_{v_l-1}} \in W_{j_{g_{v_l-1}},\tilde{j}_{g_{v_l}}}(A_{k_{g_{v_l-1}}},A_{k_{g_{v_l}}}),$$ and by Lemma \ref{lem:UpperBoundKey}, we see that up to a scalar the bridge between the $i$th and the $(i+1)$th appearance of $A_{l}$ in $X(\mathcal{A})$ is given precisely by the matrix $e_{j_{g_i},\tilde{j}_{g_{i+1}}-1}(A_{l})$ and hence
$e_{\tilde{j}_{0}=j_{0}}(A_{k_{1}})X (\mathcal{A})e_{\tilde{j}_{s^{\prime}+1}=j_{s^{\prime}+1}}(A_{k_{s^{\prime}+1}}) = 0$. This shows the map $\Psi$ is well defined. It is clear by construction that $\Psi$ is multilinear and onto.

Let us prove the second part. Applying part ($1$) and the inequalities $c_{\widehat{X_{A_{l}}}}(A_{l}) \leq c_{n_{l}+v_{l} -1}(A_{l})$ we have
\[
\dim_{F}P_{[Z]}^{j_{0},j_{s^{\prime}+1}}(\mathcal{A})\leq c_{\widehat{X_{A_{1}}}}(A_{1}) \cdots c_{\widehat{X_{A_{q}}}}(A_{q}) \leq c_{n_{l}+v_{l} -1}(A_{l}) \cdots c_{n_{l}+v_{l} -1}(A_{l}).
\]
Furthermore, since the number of $Y$'s is bounded by $s$, and the sequence
$c_{n}(A_{l})$ is an eventually nondecreasing function in $n$ \cite{GiaZaiEve}, we obtain
\[
\dim_{F}P_{[Z]}^{j_{0},j_{s^{\prime}+1}}(\mathcal{A})\leq c_{n_{1}+s}(A_{1})\cdots c_{n_{q}+s}(A_{q}).
\]
The last inequality in the lemma follows from the fact that $c_n(B) \simeq \Theta( n^t d^n)$ for PI algebras $B$ (as proved by Berele and Regev \cite{BerRegAsymp} for unital algebras and later by Giambruno and Zaicev \cite{GiaZaiEve} for arbitrary algebras). Indeed, we have that
\[
\lim_{n\to\infty}\frac{c_{n+s}(A_{l})}{c_{n}(A_{l})}=K_2
\]
for some constant $K_2 \in \mathbb{R}$ and the result follows. This completes the proof of the lemma.
 \end{proof}
As mentioned in Remark \ref{There are other type of monomials} other type of monomials $Z$ should be considered. The proofs of the statements that correspond to Lemmas (\ref{lem:FinalReduction} - \ref{well defined map}) are similar and left to the reader.

\begin{theorem}[Upper bound]\label{Upper bound result}
\label{thm:UpperBound} There is a constant $C$ such that
\[
c_{n}(A)\leq C\cdot n^{\frac{q-d}{2}+s}d^{n}.
\]
\end{theorem}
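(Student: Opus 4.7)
The plan is to chain together the reductions developed earlier in this subsection and then feed the resulting products of matrix codimensions into Regev's asymptotic formula for $c_n(M_d(F))$. First, combining Lemmas~\ref{lem:EquiPathsApproximation}, \ref{lem:MonomialEquivalenceApproximation}, \ref{lem:FinalReduction}, and \ref{well defined map}, together with the analogous statements for the exceptional monomial types flagged in Remark~\ref{There are other type of monomials}, yields a master inequality of the form
\[
c_n(A) \;\le\; K \sum_{[\overrightarrow{p}] \in \mathbf{Path}_n/\sim} n^{s'} \!\!\sum_{n_1+\cdots+n_q = n-s'} \binom{n-s'}{n_1,\ldots,n_q}\, c_{n_1}(A_1) \cdots c_{n_q}(A_q),
\]
where $s' = s'(\overrightarrow{p}) \le s$ denotes the number of radical symbols in the path class and $K$ is independent of $n$ (it absorbs the bounded cardinality of $\mathbf{Path}_n/\sim$, the finite number of pairs $(j_0, j_{s'+1})$, and the constant from Lemma~\ref{well defined map}(2)).

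Since $A_l \cong M_{d_l}(F)$, Regev's classical asymptotic \cite{RegAsymp} gives $c_{n_l}(A_l) \simeq \gamma_l \cdot n_l^{(1-d_l^2)/2} (d_l^2)^{n_l}$ for some constant $\gamma_l$. Substituting and absorbing the $\gamma_l$'s into $K$, the problem reduces to bounding the weighted multinomial sum
\[
S(n, s') \;:=\; \sum_{n_1+\cdots+n_q = n - s'} \binom{n-s'}{n_1, \ldots, n_q} \prod_{l=1}^q n_l^{(1-d_l^2)/2} (d_l^2)^{n_l}.
\]

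The principal technical step — and the point where I anticipate the most care is needed — is establishing $S(n, s') = O\!\left(n^{(q-d)/2} d^{n-s'}\right)$. Without the polynomial weights the multinomial theorem yields $\sum \binom{n-s'}{n_1, \ldots, n_q} \prod_l (d_l^2)^{n_l} = d^{n-s'}$ exactly. Dividing by $d^{n-s'}$ recasts the remaining sum as an expectation against the multinomial distribution with parameters $(d_1^2/d, \ldots, d_q^2/d)$; this distribution concentrates (via Stirling or a Laplace-type estimate) around $n_l \approx (d_l^2/d)(n-s')$, and at this peak the polynomial factor equals, up to a constant, $n^{\sum_l (1-d_l^2)/2} = n^{(q-d)/2}$, while contributions away from the peak are subdominant. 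Combining $S(n, s') \le C' n^{(q-d)/2} d^{n-s'}$ with $n^{s'} \le n^s$, $d^{n-s'} \le d^n$, and the fact that $|\mathbf{Path}_n/\sim|$ is bounded independently of $n$, we obtain $c_n(A) \le C \cdot n^{(q-d)/2 + s} d^n$, as required.
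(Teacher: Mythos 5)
Your proposal is correct and follows essentially the same route as the paper: the identical chain of reductions (Lemmas \ref{lem:EquiPathsApproximation}, \ref{lem:MonomialEquivalenceApproximation}, \ref{lem:FinalReduction}, \ref{well defined map}) leading to the same master inequality, followed by Regev's asymptotic for $c_{n_l}(M_{d_l}(F))$ and an estimate of the weighted multinomial sum. The only difference is cosmetic: where you sketch the concentration/Laplace argument for $S(n,s') = O\bigl(n^{(q-d)/2} d^{\,n-s'}\bigr)$, the paper simply invokes the Beckner--Regev theorem (Theorem \ref{RegevBeckner}), which is exactly that estimate.
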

\begin{proof}
By part (5) of \lemref{UpperBoundKey} it follows that
\[
\dim_{F}P_{[Z]}^{j_{0},j_{s^{\prime}+1}}(\mathcal{A})\leq C_{1}\cdot c_{n_{1}}(A_{1})\cdots c_{n_{q}}(A_{q}),
\]
where $n_{1},\ldots,n_{q}$ are determined by the path corresponding
to $[Z]$. Combining this with \lemref{FinalReduction} it gives
\[
\dim_{F}P_{[Z]}(\mathcal{A})\leq C_{2}\cdot c_{n_{1}}(A_{1})\cdots c_{n_{q}}(A_{q}).
\]

By \lemref{MonomialEquivalenceApproximation} it follows that
\[
\sum_{[Z]\in\mathbf{Mon}_{[\overrightarrow{p}]}(n_{1},\ldots,n_{q})/\sim}\dim_{F}P_{[Z]}(\mathcal{A})\leq C_{3}\cdot n^{s^{\prime}}{n-s^{\prime} \choose n_{1},\ldots,n_{q}}\cdot c_{n_{1}}(A_{1})\cdots c_{n_{q}}(A_{q}),
\]
where $s^{\prime}=n-n_{1}-\cdots-n_{q}$. Thus,
\[
\dim_{F}P_{[\overrightarrow{p}]}(\mathcal{A})\leq C_{4}\cdot n^{s^{\prime}}\cdot\sum_{n_{1}+\cdots+n_{q}=n-s^{\prime}}{n-s^{\prime} \choose n_{1},\ldots,n_{q}}c_{n_{1}}(A_{1})\cdots c_{n_{q}}(A_{q}).
\]
By \lemref{EquiPathsApproximation} we obtain
\[
c_{n}(A)\leq C_{5}\cdot\sum_{s^{\prime}=0}^{s}\left(n^{s^{\prime}}\cdot\sum_{n_{1}+\cdots+n_{q}=n-s^{\prime}}{n-s^{\prime} \choose n_{1},\ldots,n_{q}}c_{n_{1}}(A_{1})\cdots c_{n_{q}}(A_{q})\right).
\]
Next, by \cite{RegAsymp},
\[
\sum_{n_{1}+\cdots+n_{q}=n-s^{\prime}}{n-s^{\prime} \choose n_{1},\ldots,n_{q}}c_{n_{1}}(A_{1})\cdots c_{n_{q}}(A_{q}) \leq C_{6}\cdot\sum_{n_{1}+\cdots+n_{q}=n-s^{\prime}}{n-s^{\prime} \choose n_{1},\ldots,n_{q}}n_{1}^{\frac{1-d_{1}^{2}}{2}}d_{1}^{2n_{1}}\cdots n_{q}^{\frac{1-d_{q}^{2}}{2}}d_{q}^{2n_{q}},
\] for some constant $C_{6}$.

This, by a theorem of Regev and Beckner (which we recall below for the reader convenience), is asymptotically equal to

\[
C_{7}\cdot(n-s^{\prime})^{\frac{q-d}{2}}d^{n}.
\]
 All in all we have
\[
c_{n}(A)\leq C_{8} \cdot\sum_{s^{\prime}=0}^{s}\left(n^{s^{\prime}}\cdot(n-s^{\prime})^{\frac{q-d}{2}}d^{n}\right)  \leq C\cdot n^{\frac{q-d}{2}+s}d^{n}
\]

as desired.
\end{proof}

\begin{theorem}$($Regev and Bekner $\cite{BecReg}$$)$ \label{RegevBeckner}
Let $r_{1},\ldots ,r_{q},k_{1}, \ldots ,k_{q}\in\mathbb{R}$
  such that $0<k_{1},\ldots ,k_{q}$.
  Then,
 $$\sum_{n_{1}+\cdots+n_{q}=n}{n \choose n_{1},\ldots ,n_{q}}k_{1}^{n_{1}}\cdots k_{q}^{n_{q}}n_{1}^{r_{1}}\cdots n_{q}^{r_{q}}\sim\left(\left(\frac{k_{1}}{k}\right)^{r_{1}}\cdots\left(\frac{k_{q}}{k}\right)^{r_{q}}\right)\cdot n^{r}k^{n},$$
 where $k=k_{1}+\cdots+k_{q}$
  and $r=r_{1}+\cdots+r_{q}$.
\end{theorem}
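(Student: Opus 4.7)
The plan is a probabilistic / Laplace-style argument. Set $p_{i} = k_{i}/k$ so that $p_{1} + \cdots + p_{q} = 1$, and introduce a multinomial random vector $(N_{1}, \ldots, N_{q}) \sim \mathrm{Mult}(n;\, p_{1}, \ldots, p_{q})$. Factoring $k^{n} = (k_{1} + \cdots + k_{q})^{n}$ out of each term, the sum on the left-hand side equals $k^{n}\, \mathbb{E}[N_{1}^{r_{1}} \cdots N_{q}^{r_{q}}]$, so the theorem reduces to the asymptotic
\[
\mathbb{E}\bigl[N_{1}^{r_{1}} \cdots N_{q}^{r_{q}}\bigr] \sim n^{r}\, p_{1}^{r_{1}} \cdots p_{q}^{r_{q}}.
\]

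The heuristic is immediate: by the strong law of large numbers, $N_{i}/n \to p_{i} > 0$ almost surely, so $N_{i}^{r_{i}} \sim (n p_{i})^{r_{i}}$ and the product tends almost surely to $n^{r} \prod_{i} p_{i}^{r_{i}}$. To turn this into the claimed asymptotic equivalence of expectations, I would split $\mathbb{E}[N_{1}^{r_{1}} \cdots N_{q}^{r_{q}}]/n^{r}$ into a \emph{bulk} contribution on the event $A_{\epsilon} = \{|N_{i}/n - p_{i}| \leq \epsilon \text{ for all } i\}$ with $0 < \epsilon < \min_{i} p_{i}$, and a \emph{tail} contribution on $A_{\epsilon}^{c}$. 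On $A_{\epsilon}$ the quantities $(N_{i}/n)^{r_{i}}$ are uniformly bounded (above, and away from $0$), so bounded/dominated convergence applied to $N_{1}^{r_{1}} \cdots N_{q}^{r_{q}}/n^{r}$ delivers the target limit $\prod_{i} p_{i}^{r_{i}}$.

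The main obstacle is controlling the tail, particularly when some $r_{i}$ are negative (as in the application of this theorem, where $r_{i} = (1 - d_{i}^{2})/2 \leq 0$). Hoeffding's inequality applied to the binomial marginals of $(N_{1}, \ldots, N_{q})$ gives $P(A_{\epsilon}^{c}) \leq 2q\, e^{-2 n \epsilon^{2}}$, exponentially small in $n$. Under the natural convention that terms with $n_{i} = 0$ are excluded from the sum whenever $r_{i} < 0$ (so that $N_{i}^{r_{i}}$ is always well defined), the expression $|N_{1}^{r_{1}} \cdots N_{q}^{r_{q}}|/n^{r}$ is bounded by a polynomial in $n$, so the tail contributes at most $n^{O(1)}\, e^{-c n}$, which is negligible. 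As a fully analytic alternative, one can apply Stirling's approximation directly: near the peak $n_{i} = n p_{i}$, the weight $\binom{n}{n_{1},\ldots,n_{q}} p_{1}^{n_{1}} \cdots p_{q}^{n_{q}}$ becomes a Gaussian density on the hyperplane $\sum (n_{i} - n p_{i}) = 0$; after the substitution $n_{i} = n p_{i} + \xi_{i} \sqrt{n}$ the sum turns into a Riemann sum approximating a unit Gaussian integral, while $n_{i}^{r_{i}} \sim (n p_{i})^{r_{i}}$ supplies the announced leading factor $n^{r} \prod_{i} p_{i}^{r_{i}}$.
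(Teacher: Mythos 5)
The paper does not prove this statement; it is quoted verbatim from Beckner--Regev, whose argument is precisely the probabilistic one you give (rewriting the sum as $k^{n}\,\mathbb{E}[N_{1}^{r_{1}}\cdots N_{q}^{r_{q}}]$ for a multinomial vector and localizing near the mean). Your bulk/tail split with Hoeffding is sound --- the only cosmetic point is that the ``dominated convergence'' step is really the two-sided sandwich $m_{\epsilon}P(A_{\epsilon})\leq \mathbb{E}[\prod_i(N_{i}/n)^{r_{i}}\mathbf{1}_{A_{\epsilon}}]\leq M_{\epsilon}P(A_{\epsilon})$ with $m_{\epsilon},M_{\epsilon}\to\prod_i p_{i}^{r_{i}}$, since the $N_{i}$ live on a different space for each $n$ --- and your remark about excluding $n_{i}=0$ terms when $r_{i}<0$ correctly flags an ambiguity already present in the statement.
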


\subsection{Lower bound}\label{sec: lower bound}

Let $A$ be a basic algebra over a field $F$. Write $A=\left(A_{ss}=A_{1}\times\cdots\times A_{q}\right)\oplus J$
and denote by $d_{i}^{2}$ the dimension of $A_{i}$, $i=1,\ldots,q$. Furthermore,
$d=d_{1}^{2}+\cdots+d_{q}^{2}$ and $s$ denotes the nilpotency degree
of $J$ minus $1$.

\begin{convention}
In the sequel, as we may by linearity, all evaluations we consider of multilinear polynomials are from $A_{1}\cup\cdots\cup A_{q}\cup J$.
\end{convention}

Since $A$ is basic, by Theorem \ref{th: basis iff par= index}, it possesses a multilinear polynomial $$f_{0}=f_{0}\left(z_{1},\ldots,z_{q};B:=B_{1}\cup\cdots\cup B_{s};E\right),$$
where
\begin{enumerate}
\item $|B_{1}|=\cdots=|B_{s}|=d+1$.

\item $f_{0}$ alternates on each set $B_{i}$, $i=1,\ldots,s$. Therefore, any nonzero
evaluation of the variables of $f_{0}$ takes exactly one variable of every $B_{i}$
to a radical element and the remaining variables (including the $z$'s
and the ones from $E$) to a semisimple element.

\item There is a nonzero evaluation of the variables of $f_{0}$ such that
$\tilde{z}_{i}=1(A_{i})$ for $i=1,\ldots,q$.
\end{enumerate}
Till the end of this section we fix a nonzero evaluation, denoted by $\tilde{f_{0}} = f_{0}\left(\tilde{z}_{1},\ldots,\tilde{z}_{q};\tilde{B};\tilde{E}\right)$, which satisfies (3). Moreover,
for $i=1,\ldots,s$, we denote by $w_{i}\in B_{i}$ the variable such that
$\tilde{w}_{i}\in J$.

\begin{remark}
In the sequel we'll consider partial evaluations of multilinear polynomials. The properties of these evaluations rely on the existence of the evaluation $\tilde{f_{0}}$ of $f_{0}$.
\end{remark}

Consider the multilinear polynomial
\[
f_{1}=f_{1}(z_{1},\ldots,z_{q};B;Y;E)=f_{0}(y_{1,1}y_{1,2}z_{1}y_{1,3}y_{1.4},\ldots,y_{q,1}y_{q,2}z_{q}y_{q,3}y_{q,4}; B; E),
\]
where $Y=\{y_{i,j}|\, i=1,\ldots,q,\, j=1,\ldots,4\}.$ We shall abuse notation
by writing $f_{1}(z_{1},\ldots,z_{q})$ (omitting $E,Y$ and $B$). Furthermore,
we denote $B\cup Y\cup E$ by $BYE$. Note that since $f_{0}$ is a nonidentity of $A$, $f_{1}$ is a nonidentity as well.

\begin{remark}In what follows, roughly speaking, we shall replace the variables $z_1,\ldots,z_{q}$, with multilinear polynomials $g_{1},\ldots,g_{q}$. The polynomials $g_{1},\ldots,g_{q}$ will be elements in $F\langle x_{1},\ldots, x_{n} \rangle$, $n \in \mathbb{N}$, where different polynomials depend on disjoint sets of variables. This will give rise to a multilinear polynomial in the variables $\{x_{1},\ldots,x_{n}\} \cup BYE$.
\end{remark}

For any $n \in \mathbb{N}$ let $X_{n}=\{x_{1},\ldots,x_{n}\}$, and let $X_{n} \cup BYE$ be the corresponding set of variables.
Let $P_{X_{n};BYE}$ be the $F$-space of all multilinear polynomials on $X_{n} \cup BYE$. The symmetric group $S_{n}$ acts on $P_{X_{n};BYE}$ by
\[
\sigma\cdot f(x_{1},\ldots,x_{n};BYE)=f(x_{\sigma^{-1}(1)},\ldots,x_{\sigma^{-1}(n)};BYE)
\]
and it is well known that this action induces an $S_{n}$-module structure on the space
\[
P_{X_{n};BYE}(A)=\frac{P_{X_{n};BYE}}{P_{X_{n};BYE}\cap id(A)}.
\]

Now, consider
a partition $\mathbf{p}$ of the set $X_{n}$ into $q$ subsets, denoted
by $X[A_{1}],\ldots,X[A_{q}]$, where each $X[A_{i}]$ is nonempty (we are interested in $n \rightarrow \infty$, so we may assume that $n\geq q$).

Consider the symmetric groups $S_{X[A_{1}]},\ldots,S_{X[A_{q}]}$ and
their product $S_{\boldsymbol{p}}=S_{X[A_{1}]}\times\cdots\times S_{X[A_{q}]}<S_{n}$.
Clearly, by restriction, we obtain $S_{\boldsymbol{p}}$-modules structures on $P_{X_{n};BYE}$ and on
$P_{X_{n};BYE}(A).$

By the well known embedding of the relatively free algebra
of $A$ in $U_{A}=A\otimes_{F}F(\theta_{i,j}\,:\, i\in\mathbb{N},\, j=1,\ldots,\dim_{F}A)$
we may view the space $P_{X_{n};BYE}(A)$ as
a subspace of $U_{A}$. This allows us to consider partial evaluation which is a key idea in this section.

\begin{definition} \label{partial evaluations}

For $\mathbf{\tilde{j}}, \mathbf{j}\in\{1,\ldots,d_{1}\}\times\cdots\times\{1,\ldots,d_{q}\}$, let
$P_{X_{n};BYE}^{\tilde{\mathbf{j}},\mathbf{j}}(A)\subseteq U_{A}$
be the space obtained from $P_{X_{\mathbf{n}};BYE}(A)$ by performing the following evaluations on some of the variables:

\begin{enumerate}
\item

$y_{i,2}\to\bar{y}_{i,2}=e_{\tilde{j}_{i}}(A_{i})$ and $y_{i,3}\to\bar{y}_{i,3}=e_{j_{i}}(A_{i})$
for $i=1,\ldots,q$.

\item
For $i=1,\ldots,s$, and for any $w \in B_{i}\setminus \{w_{i}\}$ we replace $w$ by its value $\tilde{w}$. Note that $\tilde{w}$ is necessary a semisimple element.

\end{enumerate}

For $g\in P_{X_{n};BYE}(A)$ we denote by $\bar{g}$
its image in $P_{X_{n};BYE}^{\tilde{\mathbf{j}},\mathbf{j}}(A)$.

\end{definition}

\begin{notation}

In order to simplify our notation for elements in $P_{X_{n};BYE}^{\tilde{\mathbf{j}},\mathbf{j}}(A)$ we shall write $\overline{f(x_1,\ldots,x_n)}$ if the variables $BYE$ do not play a role in our variable manipulations. In Lemma \ref{including the radical variables} we'll need to manipulate the variables of $X_{n}$ and $B$ and so we'll write $\overline{f(x_1,\ldots,x_n;w_1,\ldots,w_s)}$.
\end{notation}

Observe that $\dim_{F}P_{X_{n};BYE}^{\tilde{\mathbf{j}},\mathbf{j}}(A)\leq\dim_{F}P_{X_{n};BYE}(A)$.
Therefore, for the lower bound, it will be sufficient to bound from below one of the
spaces $P_{X_{n};BYE}^{\tilde{\mathbf{j}},\mathbf{j}}(A)$.

The elements of $P_{X_{n};BYE}^{\tilde{\mathbf{j}},\mathbf{j}}(A)$,
as the elements of $P_{X_{n};BYE}(A)$, will be referred as \emph{multilinear
polynomials}.

As for the elements of $P_{X_{\mathbf{n}};BYE}(A)$, we perform the partial evaluations $1$ and $2$ in Definition \ref{partial evaluations} also on the polynomial
\[
f_{1}=f_{1}(z_{1},\ldots,z_{q};B;Y;E)=f_{0}(y_{1,1}y_{1,2}z_{1}y_{1,3}y_{1.4},\ldots,y_{q,1}y_{q,2}z_{q}y_{q,3}y_{q,4}; B; E),
\] and denote the obtained polynomial by $\overline{f_{1}}$.

\begin{lemma}
\label{lem:Consider-a-nonzero} For any nonzero evaluation of
$\overline{f_{1}}$ the following hold.
\begin{enumerate}
\item The variables $\{z_{1},\ldots,z_{q}\}\cup Y\cup E$ are all evaluated
by semisimple elements.

\item Each $w_{i} \in B_{i}$, $i=1,\ldots,s$, is evaluated by a radical element
(note the other elements of $B_{i}$ were already replaced by semisimple elements).

\item For every $i=1,\ldots,q$, the variable $z_{i}$ is evaluated by an
element of $A_{i}$.
\end{enumerate}

Moreover, any nonzero evaluation  $\bar{g}_{i}$ of a polynomial $g_i$  by elements
of $A_{i}$, $i=1,\ldots,q$, such that $e_{j_i}\bar{g}_{i}e_{\tilde{j}_i}\neq 0$ may be \uline{extended} to a nonzero
evaluation of $\overline{f_{\mathbf{1}}(g_{1},\ldots,g_{q})}$.

\end{lemma}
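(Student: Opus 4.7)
The plan for parts (1)--(3) is a pigeonhole / dimension count. First I would observe that $f_{0}$ alternates on $B_{i}$ with $|B_{i}| = d + 1 = \dim_{F} A_{ss} + 1$, so any evaluation sending all variables of $B_{i}$ into $A_{ss}$ annihilates $f_{0}$ by alternation; hence each $B_{i}$ must contribute at least one $J$-valued substitution, for \emph{at least} $s$ radical substitutions in total. Conversely, since $J$ is a two-sided ideal of $A$ and $f_{1}$ is multilinear, any monomial of $f_{1}$ with $t$ radical substitutions produces an element of $J^{t}$; so $t \geq s+1$ would force the value to lie in $J^{s+1} = 0$. A nonzero evaluation therefore has \emph{exactly} $s$ radical substitutions, one per $B_{i}$; since the variables of $B_{i} \setminus \{w_{i}\}$ are already frozen to the semisimple values $\tilde{w}$, the radical substitution must occur at $w_{i}$, giving (2), while all remaining variables ($\{z_{i}\} \cup Y \cup E$ together with $B \setminus \{w_{1}, \ldots, w_{s}\}$) are forced to be semisimple, giving (1). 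For (3), in each inner block $y_{i,1} y_{i,2} z_{i} y_{i,3} y_{i,4}$ the factors $y_{i,2} = e_{\tilde{j}_{i}}(A_{i})$ and $y_{i,3} = e_{j_{i}}(A_{i})$ both lie in $A_{i}$, so by the orthogonality $A_{i} A_{j} = 0$ for $i \neq j$ only the $A_{i}$-component of $z_{i}$ contributes, and we may replace $z_{i}$ by that projection.

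For the ``Moreover'' statement I would construct the extension directly from the reference evaluation $\tilde{f_{0}}$, which by hypothesis satisfies $\tilde{z}_{i} = 1(A_{i})$. Expanding $1(A_{i}) = \sum_{k} e_{k,k}(A_{i})$ and using multilinearity,
\[
0 \neq \tilde{f_{0}}\bigl(1(A_{1}), \ldots, 1(A_{q}); \tilde{B}; \tilde{E}\bigr) = \sum_{\mathbf{k}, \mathbf{k}'} \tilde{f_{0}}\bigl(e_{k_{1}, k'_{1}}(A_{1}), \ldots, e_{k_{q}, k'_{q}}(A_{q}); \tilde{B}; \tilde{E}\bigr),
\]
so there exist indices $(k_{i}, k'_{i})$ for which the corresponding summand is nonzero. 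I would then absorb the discrepancy between the matrix-unit pair dictated by $\bar{g}_{i}$ and the target pair $(k_{i}, k'_{i})$ into the free variables $y_{i,1}, y_{i,4}$: set $\tilde{y}_{i,1} = e_{k_{i}, \tilde{j}_{i}}(A_{i})$ and $\tilde{y}_{i,4} = e_{j_{i}, k'_{i}}(A_{i})$, so that the effective inner block becomes
\[
\tilde{y}_{i,1} \, e_{\tilde{j}_{i}}(A_{i}) \, \bar{g}_{i} \, e_{j_{i}}(A_{i}) \, \tilde{y}_{i,4} \;=\; \alpha_{i} \, e_{k_{i}, k'_{i}}(A_{i}),
\]
where $\alpha_{i} \neq 0$ is the matrix entry singled out by the hypothesis $e_{j_{i}} \bar{g}_{i} e_{\tilde{j}_{i}} \neq 0$ (under the paper's indexing convention). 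Keeping the remaining variables ($w_{i}$, the frozen $B_{i} \setminus \{w_{i}\}$, and $E$) at their reference values from $\tilde{f_{0}}$, multilinearity of $\tilde{f_{0}}$ in the $z$-slots yields the total evaluation $\bigl(\prod_{i} \alpha_{i}\bigr) \tilde{f_{0}}(e_{k_{1}, k'_{1}}(A_{1}), \ldots; \tilde{B}; \tilde{E}) \neq 0$, as required.

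The main obstacle I anticipate is in the second paragraph: choosing the matrix-unit indices $(k_{i}, k'_{i})$ so that both the summand of $\tilde{f_{0}}$ is nonzero \emph{and} the target $e_{k_{i}, k'_{i}}(A_{i})$ can be produced from the ``given'' data $e_{\tilde{j}_{i}} \bar{g}_{i} e_{j_{i}}$ by a suitable choice of $\tilde{y}_{i,1}, \tilde{y}_{i,4}$. This compatibility is what the construction above secures: since $A_{i} \cong M_{d_{i}}(F)$ acts transitively on rank-one matrix units by two-sided multiplication, \emph{every} nonzero $e_{\tilde{j}_{i}} \bar{g}_{i} e_{j_{i}}$ can be pushed to a prescribed $\alpha_{i} e_{k_{i}, k'_{i}}(A_{i})$ by an appropriate pair of matrix units, and the choices in different simple components are independent because of the orthogonality $A_{i} A_{j} = 0$. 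An analogous treatment handles variants of $\overline{f_{1}}$ where $E$-variables or the $B_{i} \setminus \{w_{i}\}$ substitutions need to be taken semisimple but not yet fixed, by invoking property (3) of the reference evaluation $\tilde{f_{0}}$ to start from $\tilde{z}_{i} = 1(A_{i})$.
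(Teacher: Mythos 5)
Your proof is correct and follows essentially the same route as the paper: parts (1)--(3) via the alternation/nilpotency count (which the paper dismisses as ``clear,'' since it is already built into the stated properties of $f_0$) together with orthogonality of the frozen idempotents, and the ``Moreover'' by bridging $e_{\tilde{j}_i}\bar{g}_i e_{j_i}$ to a prescribed matrix unit through $y_{i,1},y_{i,4}$ and invoking the reference evaluation with $\tilde{z}_i=1(A_i)$. Your explicit expansion of $1(A_i)$ into (diagonal) matrix units to isolate a nonzero summand is in fact a welcome sharpening of the paper's looser claim that the inner block can be made equal to ``any element of $A_i$.''
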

\begin{proof}
Parts (1) and (2) are clear. Part (3) now follows, since $\bar{y}_{i,2}$ (and $\bar{y}_{i,3}$)
is an element from $A_{i}$ and $z_{i}$ must be a semisimple element
(by (1)).

We turn to the last part of the Lemma. Since $\bar{y}_{i,2}\bar{g_i}\bar{y}_{i,3}\neq0$,
we can find suitable evaluations of $y_{i,1}$ and $y_{i,4}$ so that
the expression $y_{i,1}\bar{y}_{i,2}\bar{g_i}\bar{y}_{i,3}y_{i,4}$
is evaluated by any element of $A_{i}$. We are done since there is
a nonzero evaluation of $f_{0}$ were each $z_{i}$ (as variable
of $f_{0}$) is evaluated by an element of $A_{i}$.\end{proof}
\begin{lemma}
\label{lem:HomoToFunctionals}Let $V$ and $W$ be finite dimensional
vector spaces and let $U$ be a subspace of $Hom_{F}(V,W)$. Fix a
basis $w_{1},\ldots,w_{\dim_{F}W}$ of $W$. Then there is an element
$\psi$ in the dual basis of $W$ such that
\[
\dim_{F}(\psi\circ U)\geq\frac{\dim_{F}U}{\dim_{F}W}
\] where $\psi\circ U$ is the space of all elements $\psi \circ T$ where $T \in U$.
\end{lemma}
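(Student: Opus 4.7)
The plan is to use a pigeonhole argument via the dual basis. Let $\psi_1, \ldots, \psi_m$ be the basis of $W^*$ dual to $w_1, \ldots, w_m$ (where $m = \dim_F W$). First I would consider the linear map
\[
\Phi : U \longrightarrow \bigoplus_{i=1}^{m} (\psi_i \circ U), \qquad T \longmapsto (\psi_1 \circ T, \ldots, \psi_m \circ T),
\]
where each summand $\psi_i \circ U \subseteq \Hom_F(V, F) = V^*$.

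Next I would check that $\Phi$ is injective: if $\Phi(T) = 0$ then $\psi_i(T(v)) = 0$ for all $v \in V$ and all $i$, and since $\psi_1, \ldots, \psi_m$ separate points of $W$, this forces $T(v) = 0$ for every $v$, i.e. $T = 0$. Consequently,
\[
\dim_F U \leq \sum_{i=1}^{m} \dim_F (\psi_i \circ U).
\]

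Finally, by an averaging (pigeonhole) argument, at least one index $i_0$ must satisfy $\dim_F (\psi_{i_0} \circ U) \geq \dim_F U / m = \dim_F U / \dim_F W$, and taking $\psi = \psi_{i_0}$ gives the desired element of the dual basis. There is essentially no obstacle here — the argument is a two-line linear algebra observation, and the only point worth emphasizing is that injectivity of $\Phi$ uses precisely that $\{\psi_i\}$ is a basis (so nothing weaker than a spanning set of $W^*$ would do).
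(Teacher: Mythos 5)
Your proposal is correct and follows essentially the same route as the paper: the authors also define the map $T\mapsto(\psi_{1}\circ T,\ldots,\psi_{\dim_{F}W}\circ T)$ into $V^{\ast}\oplus\cdots\oplus V^{\ast}$, note its injectivity, and conclude by the same pigeonhole inequality. No gaps.
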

\begin{proof}
It is clear that the map $\Psi:Hom_{F}(V,W)\to V^{\ast}\oplus\cdots\oplus V^{\ast}$
($\dim_{F}W$ times) given by
\[
\Psi(T)=(\psi_{1}\circ T,\ldots,\psi_{\dim_{F}W}\circ T),
\]
is injective (in fact, it is an isomorphism), where $\psi_{1},\ldots,\psi_{\dim_{F}W}$
is any basis of $W^{\ast}$ (in our case it is the dual basis of ($w_{1},\ldots,w_{\dim_{F}W}$)). As
a result,
\[
\dim_{F}U\leq\sum_{i=1}^{\dim_{F}W}\dim_{F}\psi_{i}\circ U.
\]
So there is some $i_{0}$ such that
\[
\dim_{F}U\leq\dim_{F}W\cdot\dim_{F}\psi_{i_{0}}\circ U.
\]
 So $\psi=\psi_{i_{0}}$ is the required dual basis element.
\end{proof}
Next, consider the spaces $T_{X[A_{i}]}(A_{i})=Hom_{F}(A_{i}^{\otimes n_{i}},A_{i})$
where $i=1,\ldots,q$ ($n_i$ is the number of elements of $X[A_i]$ in the partition of $n$). Furthermore, we may view the space $P_{X[A_{i}]}(A_{i})$ as a subspace of $T_{X[A_{i}]}(A_{i})$ via the embedding $g\to \eta(g)$ which is determined by
\[
\eta(g)(a_{1}\otimes\cdots\otimes a_{n_{i}})=g(a_{1},\ldots,a_{n_i}),(a_{1},\ldots,a_{n_i}) \in A_{i}^{\otimes n_{i}}.
\]

We apply the above lemma in the following setup: $V=A_{i}^{\otimes n_{i}},W=A_{i}$
and $U=P_{X[A_{i}]}(A_{i})\subseteq T_{X[A_{i}]}(A_{i})$. We use
the basis of elementary matrices as a basis of $W$.

Now, for $i=1,\ldots,q$, we denote by $\psi_{\tilde{j}_{i},j_{i}}$ the element in the dual basis of $W=A_{i}$ as given by Lemma \ref{lem:HomoToFunctionals}. Note that $\psi_{\tilde{j}_{i},j_{i}}:A_{i}\to F$
is the map assigning to each matrix of $A_{i}$ its $e_{\tilde{j}_{i},j_{i}}$
coefficient. For the given $\mathbf{j}=(j_{1},\ldots,j_{q})$ and $\tilde{\mathbf{j}}=(\tilde{j}_{1},\ldots,\tilde{j}_{q})$ we simplify our notation and denote
the space $P_{X_{n};BYE}^{\tilde{\mathbf{j}},\mathbf{j}}(A)$ by $\bar{P}_{X_{n};BYE}(A).$

\begin{theorem}

The mapping
\[
\phi_{\mathbf{p}}:P_{X[A_{1}]}(A_{1})\otimes\cdots\otimes P_{X[A_{q}]}(A_{q})\to\bar{P}_{X_{n};BYE}(A)
\]
 given by
\[
\phi_{\mathbf{p}}(g_{1}\otimes\cdots\otimes g_{q})=\overline{f_{1}(g_{1},\ldots,g_{q})},
\]
 is well defined.

Moreover, if we denote by $M(\mathbf{p})$ the image of $\phi_{\mathbf{p}}$, then
\[
\dim M(\mathbf{p})\geq\frac{1}{d_{1}^{2}\cdots d_{q}^{2}}\cdot c_{n_{1}}(A_{1})\cdots c_{n_{q}}(A_{q}),
\]
where $n_{1}=|X[A_{1}]|,\ldots,n_{q}=|X[A_{q}]|$.\end{theorem}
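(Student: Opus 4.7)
The plan is to factor $\phi_{\mathbf{p}}$ through a tensor product of ``coefficient spaces'' produced by Lemma \ref{lem:HomoToFunctionals}, and then prove that the resulting map is injective. Concretely, set
\[
\bar{U}_i := \psi_{\tilde j_i,j_i}\circ P_{X[A_i]}(A_i)\subseteq (A_i^{\otimes n_i})^*,
\]
so that Lemma \ref{lem:HomoToFunctionals} gives $\dim_F\bar{U}_i\ge c_{n_i}(A_i)/d_i^2$, and hence $\prod_i\dim_F\bar{U}_i\ge \prod_i c_{n_i}(A_i)/d_i^2$. It will then suffice to construct a factorisation
\[
P_{X[A_1]}(A_1)\otimes\cdots\otimes P_{X[A_q]}(A_q)\twoheadrightarrow \bar{U}_1\otimes\cdots\otimes \bar{U}_q\xrightarrow{\tilde\phi_{\mathbf p}} M(\mathbf p)
\]
of $\phi_{\mathbf p}$ with $\tilde\phi_{\mathbf p}$ injective, for then $\dim_F M(\mathbf p)=\prod_i\dim_F\bar{U}_i$ and the bound follows.

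Well-definedness of $\phi_{\mathbf p}$ and the existence of the factorisation rest on a single identity: after the partial evaluation $y_{i,2}\to e_{\tilde j_i}(A_i),\,y_{i,3}\to e_{j_i}(A_i)$, the sub-word $y_{i,2}z_iy_{i,3}$ becomes $e_{\tilde j_i}(A_i)z_ie_{j_i}(A_i)$, which on any $z_i\in A_i$ equals $\psi_{\tilde j_i,j_i}(z_i)\cdot e_{\tilde j_i,j_i}(A_i)$. Combined with Lemma \ref{lem:Consider-a-nonzero}(3) (every relevant substitution of $z_i$ lies in $A_i$), this shows that $\overline{f_1(g_1,\ldots,g_q)}$ depends on $g_i$ only through the image $\psi_{\tilde j_i,j_i}\circ g_i\in\bar U_i$. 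In particular $\overline{f_1(g_1,\ldots,g_q)}$ vanishes whenever some $g_i\in\Id(A_i)$, and more generally whenever some $g_i$ has zero image in $\bar U_i$, giving both well-definedness and the factorisation.

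The main obstacle is injectivity of $\tilde\phi_{\mathbf p}$. Choose bases $\{\bar h_i^{(k)}\}_{k=1}^{m_i}$ of $\bar U_i$ with preimages $g_i^{(k)}\in P_{X[A_i]}(A_i)$. Since the $\bar h_i^{(k)}$ are linearly independent functionals on $A_i^{\otimes n_i}$, there exist dual tuples $\vec a_i^{(l)}\in A_i^{\otimes n_i}$ with $\bar h_i^{(k)}(\vec a_i^{(l)})=\delta_{k,l}$. Given a nonzero $\eta=\sum_{\vec k}\lambda_{\vec k}\bar h_1^{(k_1)}\otimes\cdots\otimes\bar h_q^{(k_q)}$, pick $\vec l$ with $\lambda_{\vec l}\ne 0$ and evaluate the $X[A_i]$-variables of $\tilde\phi_{\mathbf p}(\eta)$ according to $\vec a_i^{(l_i)}$. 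By the identity above each $z_i$-slot collapses to $\delta_{k_i,l_i}\cdot e_{\tilde j_i,j_i}(A_i)$, and multilinearity of $f_0$ in the $z_i$'s kills every summand except $\vec k=\vec l$, leaving
\[
\lambda_{\vec l}\cdot f_0\bigl(y_{1,1}e_{\tilde j_1,j_1}(A_1)y_{1,4},\ldots,y_{q,1}e_{\tilde j_q,j_q}(A_q)y_{q,4};\bar B;E\bigr).
\]
The ``moreover'' part of Lemma \ref{lem:Consider-a-nonzero}, applied to $\bar g_i=e_{\tilde j_i,j_i}(A_i)$ (which satisfies $e_{\tilde j_i}\bar g_ie_{j_i}\ne 0$), supplies values of $y_{i,1},y_{i,4},w_i,E$ making this expression nonzero, so $\tilde\phi_{\mathbf p}(\eta)\ne 0$ in $M(\mathbf p)$.

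The delicate point is that the dual-basis construction constrains only the $(\tilde j_i,j_i)$-entries of the matrices $g_i^{(k_i)}(\vec a_i^{(l_i)})$, while their remaining entries are uncontrolled; what saves the argument is exactly the sandwiching by $e_{\tilde j_i}(A_i)$ and $e_{j_i}(A_i)$ built into the partial evaluation, which projects away all entries except the $(\tilde j_i,j_i)$ one. Thus the whole strategy hinges on having introduced $y_{i,2},y_{i,3}$ at the outset, with $y_{i,1},y_{i,4}$ reserved to reconstruct the simple component on which Lemma \ref{lem:Consider-a-nonzero} operates.
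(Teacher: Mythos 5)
Your proposal is correct and follows essentially the same route as the paper: well-definedness via the fact that nonzero evaluations force the $X[A_i]$-variables into $A_i$ (so the result depends on $g_i$ only through $\psi_{\tilde j_i,j_i}\circ g_i$), and the dimension bound via Lemma \ref{lem:HomoToFunctionals} together with the dual-basis evaluation argument that collapses each $z_i$-slot to $\delta_{k_i,l_i}\,e_{\tilde j_i,j_i}(A_i)$ and finishes with the ``moreover'' part of Lemma \ref{lem:Consider-a-nonzero}. The only cosmetic difference is that you make the factorisation through $\bar U_1\otimes\cdots\otimes\bar U_q$ explicit and obtain equality $\dim M(\mathbf p)=\prod_i\dim\bar U_i$, whereas the paper only proves injectivity on a lifted basis subspace, which yields the same inequality.
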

\begin{proof}
It is convenient to introduce the following auxiliary spaces which we denote by

$$\bar{P}_{z_1,\ldots,z_{i-1},X[A_{i}],z_{i+1},\ldots, z_{q};BYE}(A)=P^{\tilde{\mathbf{j}},\mathbf{j}}_{z_1,\ldots,z_{i-1},X[A_{i}],z_{i+1},\ldots, z_{q};BYE}(A).$$

 As for the construction of $\bar{P}_{X_{n};BYE}(A) = P_{X_{n};BYE}^{\tilde{\mathbf{j}},\mathbf{j}}(A)$ above, the space $\bar{P}_{z_1,\ldots,z_{i-1},X[A_{i}],z_{i+1},\ldots, z_{q};BYE}(A)$, $i=1,\ldots,q$, is obtained from the space
$$P_{z_1,\ldots,z_{i-1},X[A_{i}],z_{i+1},\ldots, z_{q};BYE}(A)$$ by performing evaluations $1$ and $2$ of Definition \ref{partial evaluations}.

Note that
$$\overline{f_{1}|_{z_{i}\to g_{i}}} \in \bar{P}_{z_1,\ldots,z_{i-1},X[A_{i}],z_{i+1},\ldots, z_{q};BYE}(A).$$
Furthermore the map $\phi_{i}:P_{X[A_{i}]}(A_{i})\to \bar{P}_{z_1,\ldots,z_{i-1},X[A_{i}],z_{i+1},\ldots, z_{q};BYE}(A)$
given by $\phi_{i}(g_i)=\overline{f_{1}|_{z_{i}\to g_{i}}}$ is well
defined, since in any nonzero evaluation of $\overline{f_{1}}$ all
variables of $X[A_{i}]$ must be evaluated by elements of $A_{i}$
(see \lemref{Consider-a-nonzero}), so for $g_{i}\in Id(A_{i})$
there is no nonzero evaluation of $\overline{f_{1}|_{z_{i}\to g_{i}}}$.
In other words,
\[
g_{i}\in id(A_{i})\Longrightarrow\phi_{i}(g_i)=0.
\]
The same argument shows that the map
\[
\phi_{\mathbf{p}}^{\prime}:P_{X[A_{1}]}(A_{1})\times\cdots\times P_{X[A_{q}]}(A_{q})\to\bar{P}_{X_{n};BYE}(A)
\]
given by
\[
\phi_{\mathbf{p}}^{\prime}(g_{1},\ldots,g_{q})=\overline{f_{1}(g_{1},\ldots,g_{q})}
\]
is well defined. Since $\phi_{\mathbf{p}}^{\prime}$ is multilinear,
it induces the map
\[
\phi_{\mathbf{p}}:P_{X[A_{1}]}(A_{1})\otimes\cdots\otimes P_{X[A_{q}]}(A_{q})\to\bar{P}_{X_{n};BYE}(A).
\]

Suppose $\psi_{\tilde{j}_{i},j_{i}}\circ g_{(A_{i},1)},\ldots,\psi_{\tilde{j}_{i},j_{i}}\circ g_{(A_{i},t_i)}\in\left(A_{i}^{\otimes n_{i}}\right)^{\ast}$
is a basis of $\psi_{\tilde{j}_{i},j_{i}}\circ P_{X[A_{i}]}(A_{i})$, $i=1,\ldots,q$. \\

Applying the lemma it is clear now that in order to complete the proof it is enough to prove that
$\phi_{\mathbf{p}}$ is injective when restricted to the subspace
$T$ spanned by $g_{(A_{1},\alpha_{1})}\otimes\cdots\otimes g_{(A_{q},\alpha_{q})}$,
where $\alpha_{1}=1,\ldots,t_{1};\ldots;\alpha_{q}=1,\ldots,t_{q}$. \\

Suppose there are scalars $c_{\alpha_{1},\ldots,\alpha_{q}}\in F$ such
that
\[
0=\sum_{\alpha_{1},\ldots,\alpha_{q}}c_{\alpha_{1},\ldots,\alpha_{q}}\phi_{\mathbf{p}}(g_{(A_{1},\alpha_1)}\otimes\cdots\otimes g_{(A_{q},\alpha_q)})=\sum_{\alpha_{1},\ldots,\alpha_{q}}c_{\alpha_{1},\ldots,\alpha_{q}}\overline{f_{1}\left(g_{(A_{1},\alpha_{1})},\ldots,g_{(A_{q},\alpha_{q})}\right)}.
\]
For $i=1,\ldots,q$, let $\mathbf{a}_{1}^{(A_{i})},\ldots ,\mathbf{a}_{t_{i}}^{(A_{i})}\in A_{i}^{\otimes n_{i}}$
be a dual basis of $\ensuremath{\psi_{\tilde{j}_{i},j_{i}}\circ g_{(A_{i},1)},\ldots,\psi_{\tilde{j}_{i},j_{i}}\circ g_{(A_{i},t_{i})}\in\left(A_{i}^{\otimes n_{i}}\right)^{\ast}}$. Write $\mathbf{a}_{1}^{(A_{1})}=\sum_{l}a_{1,l}\otimes\cdots\otimes a_{n_{1},l}$.
Recall the action of $g_{(A_{1},\alpha_{1})}$ on $\mathbf{a}_{1}^{(A_{i})}$ is determined linearly via the action on $a_{1,l}\otimes\cdots\otimes a_{n_{1},l}$ and the latter is determined via the substitutions
$x_{1}\to a_{1}(l),\ldots,x_{n_{1}}\to a_{n_{1}}(l)$,
where (without loss of generality) $X[A_{1}]=\{x_{1},\ldots,x_{n_{1}}\}$. We remind the reader that $\overline{f_{1}\left(g_{(A_{1},\alpha_{1})},\ldots,g_{(A_{q},\alpha_{q})}\right)} \in \bar{P}_{X_{n};BYE}(A) = P_{X_{n};BYE}^{\tilde{\mathbf{j}},\mathbf{j}}(A)$ and $\tilde{\mathbf{j}},\mathbf{j}$ were chosen in Lemma \ref{lem:HomoToFunctionals}.
Hence we have,

\begin{eqnarray*}
0& = & \sum_{\alpha_{1},\ldots,\alpha_{q}}c_{\alpha_{1},\ldots,\alpha_{q}}\overline{f_{1}\left(g_{(A_1, \alpha_{1})}(\mathbf{a}_{1}^{(A_{1})}),g_{(A_2, \alpha_{2})},\ldots,g_{(A_q, \alpha_{q})}\right)}\\
 & = & \sum_{\alpha_{1}=1,\alpha_{2},\ldots,\alpha_{q}}c_{\alpha_{1}=1,\alpha_{2},\ldots,\alpha_{q}}\overline{f_{1}\left(e_{\tilde{j}_{1},j_{1}}(A_{1}),g_{(A_2, \alpha_{2})},\ldots,g_{(A_q, \alpha_{q})}\right)}.
\end{eqnarray*}
 By considering \textbf{$\mathbf{a}_{1}^{(A_{2})},\ldots,\mathbf{a}_{1}^{(A_{q})}$
}and applying the same argument on the last expression we conclude that
\[
c_{1,\ldots,1}\cdot\overline{f_{1}\left(e_{\tilde{j}_{1},j_{1}}(A_{1}),\ldots,e_{\tilde{j}_{q},j_{q}}(A_{q})\right)}=0.
\]
By Lemma \ref{lem:Consider-a-nonzero}, it follows that $\overline{f_{1}\left(e_{\tilde{j}_{1},j_{1}}(A_{1}),\ldots,e_{\tilde{j}_{q},j_{q}}(A_{q})\right)}\neq0$,
hence $c_{1,\ldots,1}=0$.\\

It is clear that the same argument will work for every $\alpha_{1},\ldots,\alpha_{q}$,
thus every $c_{\alpha_{1},\ldots,\alpha_{q}}=0$.
\end{proof}
Next we study the connection between the different $M(\mathbf{p})$.
\begin{lemma}
\label{lem:MTot1}Fix some $\mathbf{p}_{0}=(X[A_{1}],\ldots,X[A_{q}])$ and denote $\mathbf{\overrightarrow{n}}=(n_{1},\ldots,n_{q})$ where $n_i$ is the number of elements in $X[A_{i}]$. Let $\mathcal{T}=\{e=\tau_{1},\tau_{2},\ldots,\tau_{l}\}$
be a transversal of $S_{\mathbf{p}_{0}}$ in $S_{n}$. Then, the sum of vector spaces
\[
\MM(\mathbf{\overrightarrow{n}}):=M(\tau_{1}\mathbf{p}_{0})+\cdots+M(\tau_{l}\mathbf{p}_{0}),
\]
 is direct.

Note that $\MM(\mathbf{\overrightarrow{n}}) = \overline{FS_{\mathbf{p}_{0}}\cdot f_{1}(x_1,\ldots,x_{n_1},\ldots,x_{n_1+\cdots +n_{q-1}+1},\ldots,x_{n_{q}})}.$
\end{lemma}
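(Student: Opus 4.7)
The strategy is to show that each $M(\tau_k \mathbf{p}_0)$ is supported on a distinct homogeneous piece of a natural grading on $\bar P_{X_n;BYE}(A)$ viewed inside $U_A = A \otimes_F F[\theta]$. The plan is to embed the relatively free algebra into $U_A$ by replacing each $x_j \in X_n$ with the generic element
$\sum_{i,r,s}\theta^{(j)}_{r,s}(A_i)\,e_{r,s}(A_i)+\sum_m\theta^{(j)}(m)\,m$,
where $\{e_{r,s}(A_i)\}\cup\{m\}$ is a basis of $A$ adapted to the Wedderburn decomposition $A_{ss}\oplus J$. By multilinearity in $x_j$, every monomial appearing in the image of an element of $\bar P_{X_n;BYE}(A)$ contains exactly one $\theta^{(j)}$-variable, so it has a well-defined ``type'' $\kappa_j \in \{A_1,\dots,A_q,J\}$. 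This yields a $\{A_1,\dots,A_q,J\}^n$-grading of the image.

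Next, I claim that for a partition $\mathbf{p} = (X[A_1],\dots,X[A_q])$, the subspace $M(\mathbf{p})$ is contained in the homogeneous piece indexed by $(\kappa_1,\dots,\kappa_n)$ where $\kappa_j = A_i$ whenever $x_j \in X[A_i]$. This is where Lemma~\ref{lem:Consider-a-nonzero} comes in: for any $h \in M(\mathbf{p})$, any evaluation that sends some $x_j\in X[A_i]$ into a non-$A_i$ part of $A$ yields zero. Because $h$ is multilinear in each $x_j$, this vanishing translates, via algebraic independence of the $\theta$'s, into the vanishing of every monomial whose $\theta^{(j)}$-type is not $A_i$; equivalently, the projections of $h$ onto all graded pieces other than the claimed one are identically zero in $U_A$.

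Finally, since $\{\tau_1,\dots,\tau_l\}$ is a transversal of $S_{\mathbf{p}_0}$ in $S_n$, the partitions $\tau_k \mathbf{p}_0$ are pairwise distinct, which produces pairwise distinct type tuples: whenever $\tau_k \mathbf{p}_0 \neq \tau_{k'} \mathbf{p}_0$, some $x_j$ belongs to $X[A_i]$ under one partition and to $X[A_{i'}]$ with $i\neq i'$ under the other. Consequently the $M(\tau_k \mathbf{p}_0)$ sit in pairwise disjoint graded summands of $U_A$, and their sum $\MM(\vec n)$ is direct. The side remark that $\MM(\vec n)=\overline{FS_n \cdot f_1(\ldots)}$ follows at once: $M(\mathbf{p}_0)$ is the $FS_{\mathbf{p}_0}$-span of the base monomial $\overline{f_1(x_1,\ldots,x_{n_q})}$ (since a spanning set of each $P_{X[A_i]}(A_i)$ is obtained by permuting the variables in $X[A_i]$), and applying the transversal gives the full $FS_n$-orbit.

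The main obstacle is the second paragraph, i.e.\ translating the evaluation-level statement of Lemma~\ref{lem:Consider-a-nonzero} (``$h$ vanishes on evaluations putting some $x_j$ outside $A_i$'') into the coefficient-level statement (``the component of $h$ of any type tuple disagreeing with $\mathbf{p}$ is zero in $U_A$''). The bridge is the linearity of $h$ in each family $\{\theta^{(j)}_{r,s}(\kappa)\}_{r,s,\kappa}$ together with the algebraic independence of all the $\theta$-variables; once this is spelled out, the directness follows purely from the fact that distinct partitions index distinct graded pieces.
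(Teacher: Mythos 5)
Your proof is correct and rests on exactly the same key input as the paper's, namely Lemma \ref{lem:Consider-a-nonzero} (every nonzero evaluation of an element of $M(\mathbf{p})$ must send each $x_j\in X[A_i]$ into $A_i$) together with the observation that distinct transversal elements give distinct partitions $\tau_k\mathbf{p}_0$. The only difference is how this is converted into directness. The paper argues dually: given $\sum_k\alpha_k h_k=0$ with $0\neq h_k\in M(\tau_k\mathbf{p}_0)$, it takes a single nonzero evaluation $\xi$ of $h_1$ (which by the lemma necessarily respects $\tau_1\mathbf{p}_0$), notes that $\xi$ annihilates every $h_k$ with $k\neq 1$ because some variable lands in the wrong simple component, concludes $\alpha_1=0$, and repeats for each index. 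You argue primally: inside $U_A$ the generic substitution carries a grading by type tuples, and the same vanishing statement shows $M(\tau_k\mathbf{p}_0)$ lies entirely in the single homogeneous piece indexed by $\tau_k\mathbf{p}_0$, so the summands occupy pairwise distinct graded components. The bridge you flag is indeed the only point needing care, and it is the standard property of generic elements: the graded component of $h$ of type $(\kappa_1,\dots,\kappa_n)$ has as coefficients (of the $\theta$-monomials) precisely the values of $h$ on basis tuples from $\kappa_1\times\cdots\times\kappa_n$, so it vanishes iff $h$ vanishes on all such evaluations. Hence there is no gap; your packaging is marginally more informative, since it locates each summand rather than merely separating them and absorbs the paper's ``repeat for $h_2,\ldots,h_l$'' step into the grading.
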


\begin{proof}
Suppose
\[
0=\sum_{k=1}^{l}\alpha_{k}h_{k},
\]
where $0\neq h_{k}\in M(\tau_{k}\mathbf{p}_{0})$ and $\alpha_{k}\in F$. Here, $l=\frac{n!}{n_{1}!\cdots n_{q}!}={n \choose n_{1},\ldots,n_{q}}.$

Since

$$h_{1}=\overline{\sum_{\sigma\in S_{p_0}} \beta_{\sigma} f_{1}(x_{\sigma(1)}\cdots x_{\sigma(n_1)},\ldots,x_{\sigma{(n_1+\cdots +n_{q-1}+1)}}\cdots x_{\sigma(n)})}\neq 0$$

we obtain by Lemma \Lemref{Consider-a-nonzero} a nonzero evaluation
$\xi:F\left\{ X_{n},BYE\right\} \to A$ which sends the variables
of $X[A_{i}]$ to elements of $A_{i}$ (for $i=1,\ldots,q$). We claim this evaluation
sends each $h_{k}$ ($k\neq1$) to zero. Indeed, at least one of the
sets $\tau_{k}X[A_{1}],\ldots,\tau_{k}X[A_{q}]$ must have an element
$x\in\tau_{i_{0}}X[A_{i_{0}}]$ which is sent to some $A_{i}$, $i \neq i_0$ and hence

$$h_{k}=\overline{\sum_{\sigma\in S_{p_0}} \beta_{\tau_{k}\sigma} f_{1}(x_{\tau_{k}\sigma(1)}\cdots x_{\tau_{k}\sigma(n_1)},\ldots,x_{\tau_{k}\sigma{(n_1+\cdots +n_{q-1}+1)}}\cdots x_{\tau_{k}\sigma(n)})}$$

is sent to zero. It follows that $0=\alpha_{1}\cdot\xi(h_{1})$ and hence $\alpha_{1}=0$.

Repeating this argument for $h_{2},\ldots,h_{l}$ yields $\alpha_{2}=\alpha_{3}= \cdots =\alpha_{l}=0$.
\end{proof}
Write
\[
\MM(n):=\sum_{n_{1}+\cdots+n_{q}=n}\MM\left(\overrightarrow{n}=(n_{1},\ldots,n_{q})\right).
\]
Note that in view of our notation above for $\MM(\overrightarrow{n})$ we have
$$\MM(n) = \overline{FS_{n}\cdot f_{1}(x_1,\ldots,x_{n_1},\ldots,x_{n_1+\cdots +n_{q-1}+1},\ldots,x_{n_{q}})}.$$

Using a similar argument as in the previous lemma we obtain the following (proof is omitted).

\begin{lemma}\label{lem:MTot2}
For every $n$, the above sum in $\MM(n)$ is direct.
\end{lemma}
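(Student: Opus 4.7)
The plan is to mimic the evaluation argument of Lemma \ref{lem:MTot1}, now using the type vector $\vec{n}$ (rather than a single transversal) to distinguish summands. Assume for contradiction that $\sum_{\vec{n}} h_{\vec{n}} = 0$ with $h_{\vec{n}} \in \MM(\vec{n})$ and some $h_{\vec{n}_0} \neq 0$. By Lemma \ref{lem:MTot1} applied to $\MM(\vec{n}_0)$, decompose $h_{\vec{n}_0}$ along the direct sum $\bigoplus_k M(\tau_k \mathbf{p}_0)$, where $\mathbf{p}_0 = (X[A_1], \ldots, X[A_q])$ has parts of sizes $\vec{n}_0 = (n_1^0, \ldots, n_q^0)$, and (after relabeling the transversal if needed) assume that the component $h_1^{(0)} \in M(\mathbf{p}_0)$ corresponding to $\tau_1 = e$ is nonzero. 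Lemma \ref{lem:Consider-a-nonzero} then supplies a nonzero evaluation $\xi$ of $h_1^{(0)}$ sending each variable $x_j$ with $j \in X[A_i]$ to an element of $A_i$; in particular, $\xi$ sends exactly $n_i^0$ of the $x_j$'s into $A_i$, for each $i$.

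The heart of the argument is to show that $\xi$ annihilates every other summand. For the components of $h_{\vec{n}_0}$ itself, this is precisely the wrong-component argument of Lemma \ref{lem:MTot1}. For $\vec{n}' \neq \vec{n}_0$, again decompose $h_{\vec{n}'} = \sum_j \beta_j h_j'$ with $h_j' \in M(\sigma_j \mathbf{p}_0')$, where $\mathbf{p}_0' = (X[A_1]', \ldots, X[A_q]')$ has parts of sizes $\vec{n}' = (n_1', \ldots, n_q')$. By Lemma \ref{lem:Consider-a-nonzero}, any nonzero evaluation of $h_j'$ must send the variables indexed by $\sigma_j X[A_i]'$ into $A_i$ for every $i$; in particular it must map at least $n_i'$ of the $x_j$'s into $A_i$. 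Since $\vec{n}' \neq \vec{n}_0$ and both sum to $n$, some index $i_0$ satisfies $n_{i_0}' > n_{i_0}^0$. But $\xi$ sends only $n_{i_0}^0 < n_{i_0}'$ variables into $A_{i_0}$, so by pigeonhole some variable indexed by $\sigma_j X[A_{i_0}]'$ is sent by $\xi$ into a different simple component, forcing $\xi(h_j') = 0$ and hence $\xi(h_{\vec{n}'}) = 0$.

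Applying $\xi$ to the identity $\sum_{\vec{n}} h_{\vec{n}} = 0$ now yields $0 = \xi(h_1^{(0)}) \neq 0$, the desired contradiction. I do not anticipate any real obstacle beyond what is already deployed in Lemmas \ref{lem:Consider-a-nonzero} and \ref{lem:MTot1}: the single new ingredient is the elementary counting observation that any nonzero evaluation of a summand in $\MM(\vec{n}')$ must realize precisely the type vector $\vec{n}'$, so two distinct types cannot both be witnessed by a single evaluation. This explains the paper's remark that the proof is \emph{similar} to that of Lemma \ref{lem:MTot1} and can be safely omitted.
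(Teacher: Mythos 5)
Your proof is correct and is precisely the ``similar argument'' the paper alludes to when omitting the proof: choose a nonzero evaluation $\xi$ adapted to one nonzero summand and check that it annihilates every summand attached to a different ordered partition of $X_{n}$. Your pigeonhole step for distinct type vectors is valid, though not strictly necessary: any ordered partition $\mathbf{p}'\neq\mathbf{p}_{0}$ of $X_{n}$ (whether of the same type $\overrightarrow{n}$ or not) already places some variable in a part $X[A_{i_{0}}]'$ while $\xi$ sends it into $A_{i}$ with $i\neq i_{0}$, so the verbatim argument of Lemma \ref{lem:MTot1} disposes of all remaining summands at once.
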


Consider now the group $S_{X_{n}\cup\{w_{1},\ldots,w_{s}\}}$ (see the
beginning of the section for the definition of $w_{1},\ldots,w_{s}$)
and let $\{e=\sigma_{1},\sigma_{2},\ldots,\sigma_{u}\}$ be a transversal
of $S_{X_{n}}$ inside the aforementioned group. Note that $u=\frac{(n+s)!}{n!}=\Theta(n^{s}).$

\begin{lemma} \label{including the radical variables}
\label{lem:MTOT3}The sum
\[
\MMM(n)=\sum_{k=1}^{u}\sigma_{k}\MM(n)
\]
 is direct, where
\[
\sigma_{k}\MM(n)=\overline{\sigma_{k}FS_{n}\cdot f_{1}(x_1,\ldots,x_{n_1},\ldots,x_{n_1+\cdots +n_{q-1}+1},\ldots,x_{n_{q}};w_1,\ldots,w_s)}.
\]
\end{lemma}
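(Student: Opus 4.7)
The plan is to follow the pattern of Lemmas \ref{lem:MTot1} and \ref{lem:MTot2}: suppose we have a hypothetical relation $\sum_{k=1}^{u}\alpha_{k}\tilde{h}_{k}=0$ with $\tilde{h}_{k}=\sigma_{k}h_{k}\in\sigma_{k}\MM(n)$ all nonzero, and isolate one summand at a time by means of a carefully chosen evaluation of the variables in $X_{n}\cup\{w_{1},\ldots,w_{s}\}$. By the symmetry of the setup it is enough to handle the case $\sigma_{1}=e$ and conclude $\alpha_{1}=0$; the analogous argument with $\sigma_{k_{0}}$ playing the role of $e$ (equivalently, replacing the transversal $\{\sigma_{k}\}$ by $\{\sigma_{k_{0}}^{-1}\sigma_{k}\}$) then gives $\alpha_{k_{0}}=0$ for every $k_{0}$.

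First I would invoke Lemma \ref{lem:Consider-a-nonzero} together with the direct-sum statements of Lemmas \ref{lem:MTot1} and \ref{lem:MTot2} to produce an evaluation $\xi$ with $\xi(h_{1})\neq 0$: it sends each $x_{j}\in X[A_{i}]$ to a specific semisimple element of $A_{i}$ and each $w_{i}$ to a radical element $\tilde{w}_{i}\in J$ inherited from the reference evaluation $\tilde{f}_{0}$. The key structural input is that the coset $\sigma_{k}S_{X_{n}}$ is completely determined by the ordered tuple $(\sigma_{k}(w_{1}),\ldots,\sigma_{k}(w_{s}))$, so for every $k\neq 1$ this tuple deviates from $(w_{1},\ldots,w_{s})$. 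In the ``generic'' subcase some entry $\sigma_{k}(w_{i})$ is an $x$-label; then in every monomial of $\sigma_{k}h_{k}$ the $i$-th radical slot of $f_{1}$ is labelled by that $x$-label, and $\xi$ sends it to a semisimple element. By Lemma \ref{lem:Consider-a-nonzero}(2) the whole expression vanishes, exactly as in the proof of Lemma \ref{lem:MTot1}.

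The main obstacle is the complementary subcase in which $\sigma_{k}$ acts only by a nontrivial permutation $\pi\in S_{s}$ of $\{w_{1},\ldots,w_{s}\}$: now every radical slot is still tagged by a $w$-label, so $\xi(\sigma_{k}h_{k})$ is $\tilde{f}_{0}$ with its radicals reshuffled by $\pi$ rather than obviously zero. To absorb this case I would exploit the freedom in choosing $\tilde{w}_{1},\ldots,\tilde{w}_{s}$: since $A$ is basic and $\tilde{f}_{0}$ is a Kemer nonidentity, the matrix units flanking the $i$-th radical slot of $f_{0}$ force $\tilde{w}_{i}$ into a specific Peirce corner $e_{L}^{(i)}Je_{R}^{(i)}$ of $J$ (the same ``path'' datum exploited in Section \ref{sec: upper bound}). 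Selecting the $\tilde{w}_{i}$ in mutually incompatible corners ensures $\tilde{w}_{\pi(i)}\notin e_{L}^{(i)}Je_{R}^{(i)}$ for every nontrivial $\pi\in S_{s}$, which kills the reshuffled evaluation. Combining both subcases, the relation $\sum\alpha_{k}\tilde{h}_{k}=0$ collapses to $\alpha_{1}\xi(h_{1})=0$, so $\alpha_{1}=0$ and the symmetry argument finishes the proof.
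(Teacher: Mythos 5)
Your strategy coincides with the paper's: assume $\sum_{k}\alpha_{k}h_{k}=0$, pick a nonzero evaluation $\xi$ of $h_{1}$ sending $X_{n}$ into $A_{ss}$ and $w_{1},\ldots,w_{s}$ into $J$, and kill each $h_{k}$ with $k\neq1$ by exhibiting an alternating set $\sigma_{k}B_{i_{0}}$ of cardinality $d+1$ that $\xi$ evaluates entirely inside the $d$-dimensional space $A_{ss}$. Your ``generic'' subcase is exactly the paper's argument, and you have correctly put your finger on the one delicate point: the paper simply asserts that for every $k\neq 1$ such an $i_{0}$ exists, which fails precisely for the $s!-1$ non-identity cosets represented inside $S_{X_{n}}\times S_{\{w_{1},\ldots,w_{s}\}}$, where $\sigma_{k}B_{i}=(B_{i}\setminus\{w_{i}\})\cup\{w_{\pi(i)}\}$ still meets $\{w_{1},\ldots,w_{s}\}$ for every $i$. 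The paper's proof does not address these cosets at all.

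Your repair of that exceptional subcase, however, does not go through. The radical values $\tilde w_{1},\ldots,\tilde w_{s}$ are not free parameters: they are dictated by the fixed nonzero evaluation $\tilde f_{0}$ of the Kemer polynomial, and nothing entitles you to ``select'' them in pairwise incompatible Peirce corners. For basic algebras with $q=1$, $d_{1}=1$ and $s\geq 2$ (e.g.\ the associated algebras $\mathcal{A}_{u}$ with $A_{ss}=F$), the radical occupies the single corner $e_{1}Je_{1}$, so all $\tilde w_{i}$ lie in the same corner, your incompatibility condition is unattainable, and there is then no reason for the $\pi$-reshuffled evaluation to vanish. The clean way to close this --- and all that the lower bound actually uses, since it only needs $\Theta(n^{s})$ independent translates --- is to replace the transversal of $S_{X_{n}}$ by a transversal of $S_{X_{n}}\times S_{\{w_{1},\ldots,w_{s}\}}$, which has $\binom{n+s}{s}=\Theta(n^{s})$ elements: every non-identity coset of that subgroup genuinely moves some $w_{i_{0}}$ into $X_{n}$, and the alternation argument then applies verbatim. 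As written, your handling of the exceptional cosets is a gap.
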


\begin{proof}
The idea used in the proof of \lemref{MTot1} works also here. Suppose
\[
0=\sum_{k=1}^{u}\alpha_{k}h_{k},
\]
where $0\neq h_{k}\in\sigma_{k}\MM(n)$ and $\alpha_{k}\in F$. Since
$h_{1}\neq0$, we obtain by \lemref{Consider-a-nonzero},
a nonzero evaluation $\xi:F\left\{ X_{n},BYE\right\} \to A$ which
sends the variables in $X_{n}$ to elements of $A_{ss}$ and $w_{1},\ldots,w_{q}$
to elements of $J$. This evaluation sends each $h_{k}$ ($k\neq1$)
to zero. Indeed, $\sigma_{k}B_{1},\ldots,\sigma_{k}B_{s}$ are alternating
sets of cardinality $d+1$ in $h_{k}$. However, there is some $i_{0}$
for which $\sigma_{k}B_{i_{0}}$ does not contain any of the variables
$w_{1},\ldots,w_{s}$. Hence $\xi(\sigma_{k}B_{i_{0}})\subseteq A_{ss}$.
As a result, $\xi(h_{k})=0$. Thus, $0=\alpha_{1}\xi(h_{1})$ and
so $\alpha_{1}=0$.

Repeating this argument to $h_{2},\ldots,h_{l}$ yields $\alpha_{2}=\cdots =\alpha_{l}=0$.
\end{proof}
Now we show that asymptotically $\dim_{F}\MM(n)$ has the desired
lower bound. Recall that $c_{n_{i}}(A_{i})\sim K_{i}n^{\frac{1-d_{i}^{2}}{2}}d_{i}^{2n}$
for some constant $K_{i}\in\mathbb{R}$ {\cite{RegAsymp}}.
As a result of \lemref{MTot1}, \lemref{MTot2} and the above lemma we get (the inequality $\lesssim$ is asymptotically)
\[
\sum\limits _{n_{1}+\ldots+n_{q}=n}\frac{K_{1}\cdots K_{q}}{d_{1}^{2}\cdots d_{q}^{2}}\cdot{n \choose n_{1},\ldots,n_{q}}\cdot n_{1}^{\frac{1-d_{1}^{2}}{2}}\cdots n_{q}^{\frac{1-d_{q}^{2}}{2}}\cdot d_{1}^{2n_{1}}\cdots d_{q}^{2n_{q}}\cdot n^{s}\lesssim\dim_{F}\MMM(n).
\]

Finally we apply Theorem \ref{RegevBeckner} and obtain
\[
\dim_{F}\MMM(n)\geq Cn^{\frac{q-d}{2}+s}d^{n}
\]
 (here $C$ is some nonnegative constant).
\begin{corollary}[Lower bound]
 Let $A=A_{1}\times\cdots\times A_{q}\oplus J(A)$ be a basic algebra
of Kemer index $\kappa_{A}=(d,s)$. Then
\[
c_{n}(A)\geq Cn^{\frac{q-d}{2}+s}d^{n}.
\]
 for some constant $C\in\mathbb{R}$.\end{corollary}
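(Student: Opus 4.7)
The plan is to transfer the already-established lower bound $\dim_F \MMM(n) \geq C n^{\frac{q-d}{2}+s}d^{n}$ to a bound on $c_n(A)$ itself. Since all the heavy lifting has been done in the preceding lemmas, the remaining task is the simple observation that $\MMM(n)$ lives inside a multilinear space whose dimension is already controlled by a codimension of $A$.

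By construction, $\MMM(n)$ is a subspace of the partial-evaluation space $\bar P_{X_n;BYE}(A) = P_{X_n;BYE}^{\tilde{\mathbf{j}},\mathbf{j}}(A)$. This space is in turn the image of $P_{X_n;BYE}(A)$ under the surjective evaluation map that substitutes fixed elements of $A$ for some of the $BYE$-variables (see Definition \ref{partial evaluations}), so
\[
\dim_F \MMM(n) \;\leq\; \dim_F \bar P_{X_n;BYE}(A) \;\leq\; \dim_F P_{X_n;BYE}(A) \;=\; c_{n+m}(A),
\]
where $m := |BYE|$ is a constant that depends only on the Kemer polynomial $f_0$ and on $q$, but not on $n$. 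Combined with the displayed asymptotic this yields $c_{n+m}(A) \geq C n^{\frac{q-d}{2}+s}d^{n}$.

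To pass from $c_{n+m}(A)$ to $c_n(A)$, I would shift the index: for $n \geq m+1$, replacing $n$ by $n-m$ in the previous inequality gives
\[
c_n(A) \;\geq\; C(n-m)^{\frac{q-d}{2}+s}d^{n-m}.
\]
Since $(n-m)^a \sim n^a$ as $n \to \infty$ and $d^{n-m} = d^{-m}d^{n}$, for all sufficiently large $n$ the right-hand side is at least $\tfrac{C}{2 d^m}\, n^{\frac{q-d}{2}+s} d^{n}$; by shrinking the new constant further to also accommodate the finitely many small values of $n$ (on which $c_n(A)\geq 1$), one obtains $c_n(A) \geq C' n^{\frac{q-d}{2}+s} d^{n}$ for all $n$.

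The real obstacle in the whole lower-bound argument lies not in this corollary but in the directness statements of Lemmas \ref{lem:MTot1}, \ref{lem:MTot2}, and especially \ref{lem:MTOT3}: each must use the alternating structure of $f_0$ on the big sets $B_i$, together with Lemma \ref{lem:Consider-a-nonzero}, to conclude that distinct summands cannot cancel under a common nonzero evaluation. Once these are in place, the combination of the lower bound $\dim M(\mathbf p)\geq \prod c_{n_i}(A_i)/\prod d_i^{2}$, Regev's asymptotic $c_{n_i}(A_i) \sim K_i n_i^{(1-d_i^2)/2} d_i^{2n_i}$, and the Regev--Beckner formula (Theorem \ref{RegevBeckner}) with $k_i = d_i^2$ and $r_i = (1-d_i^2)/2$ delivers the factor $n^{(q-d)/2} d^{n}$, while the size $u=\Theta(n^s)$ of the third transversal supplies the remaining $n^s$. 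The present corollary is then just the routine final packaging.
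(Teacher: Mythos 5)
Your argument is correct and is essentially the paper's own proof: the paper likewise observes that $\MMM(n)\subseteq\overline{P_{X_{n};BYE}(A)}$, that the partial evaluation is a linear projection of $P_{X_{n};BYE}(A)$ (whose dimension is $c_{n+\gamma}(A)$ with $\gamma=|BYE|$), and then absorbs the resulting index shift into the constant. You merely spell out the shift from $c_{n+m}(A)$ to $c_n(A)$ in more detail than the paper does, which is fine.
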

\begin{proof}
It is enough to show that
\[
\dim_{F}c_{n+\gamma}(A)\geq\dim_{F}\MMM(n),
\]
 for some $\gamma$ independent of $n$. Indeed,
\[
\MMM(n)\subseteq\overline{P_{X_{n};BYE}(A)}
\]
and $\overline{P_{X_{n};BYE}(A)}$ is a projection of $P_{X_{n};BYE}(A)$.
Hence, the statement stands for $\gamma=|BYE|$.
\end{proof}
So combined with Theorem \ref{Upper bound result} we
finally get a positive answer on Giambruno's conjecture.
\begin{theorem}\label{main theorem}
Let $A=A_{1}\times\cdots\times A_{q}\oplus J(A)$ be a basic algebra
of Kemer index $\kappa_{A}=(d,s)$. Then
\[
c_{n}(A)=\Theta(n^{\frac{q-d}{2}+s}d^{n}).
\]
In the special case where the algebra $A$ has a unit we have
\[
c_{n}(A)\simeq Cn^{\frac{q-d}{2}+s}d^{n}.
\]
for some constant $0<C\in\mathbb{R}$.
\end{theorem}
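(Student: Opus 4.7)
The plan is to simply combine the two main results of Section~\ref{sec: proof Giam Conj} with the Berele--Regev asymptotic result recalled in the introduction. No new machinery is needed beyond what has already been set up.

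For the first assertion, I would observe that Theorem~\ref{Upper bound result} (the upper bound) gives constants $C_{\mathrm{up}}$ such that $c_n(A) \leq C_{\mathrm{up}} \cdot n^{\frac{q-d}{2}+s} d^n$ for all sufficiently large $n$, while the Corollary at the end of Subsection~\ref{sec: lower bound} (the lower bound) provides a constant $C_{\mathrm{low}}>0$ with $c_n(A) \geq C_{\mathrm{low}} \cdot n^{\frac{q-d}{2}+s} d^n$. Putting these together yields directly
\[
c_n(A) = \Theta\!\left(n^{\frac{q-d}{2}+s} d^n\right),
\]
which is the first half of the theorem.

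For the second assertion, I would invoke the strengthened form of Regev's conjecture due to Berele and Regev \cite{BerRegAsymp}: if $W$ is a unital affine PI algebra, then the codimension sequence actually satisfies $c_n(W) \simeq C n^{t(W)} (\exp W)^n$ for some genuine constant $C$ (not merely a $\Theta$ statement), because in the unital case the upper and lower multiplicative constants in inequality~\eqref{eq: regev conj} coincide. Applied to our unital basic algebra $A$, this says $c_n(A) \simeq C n^{t(A)} d^n$, and the $\Theta$ bound just established pins down $t(A) = \frac{q-d}{2}+s$. Hence $c_n(A) \simeq C n^{\frac{q-d}{2}+s} d^n$ with $C>0$ (positivity comes from the lower bound).

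Since the theorem is essentially a packaging result, there is no real obstacle: the substantive work has already been carried out in Theorem~\ref{Upper bound result} and in the Lower bound Corollary, whose proofs rest respectively on the reduction to the associated algebra $\mathcal{A}$ together with the Regev--Beckner estimate (Theorem~\ref{RegevBeckner}), and on the construction of the polynomial $f_1$ combined with the injectivity of the maps $\phi_{\mathbf{p}}$ and the directness of the sums $\MM(\vec n)$, $\MM(n)$, $\MMM(n)$. The only thing to be careful about is that the Berele--Regev improvement from $\Theta$ to $\simeq$ truly requires the hypothesis that $A$ be unital; without it one only has the two-sided $\Theta$ bound, which is exactly why the theorem statement splits into the two cases.
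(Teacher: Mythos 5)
Your proposal matches the paper's own argument: the theorem is obtained by combining Theorem~\ref{Upper bound result} with the Lower bound Corollary to get the $\Theta$ statement, and the unital case follows from the Berele--Regev refinement of inequality~\eqref{eq: regev conj} in which the two constants coincide. Nothing is missing, and your caveat that the passage from $\Theta$ to $\simeq$ genuinely requires the unital hypothesis is exactly the point the paper is making by splitting the statement into two cases.
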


\end{document}